\numberwithin{equation}{section}
\newtheorem{theorem}{Theorem}[section]
\newtheorem{proposition}[theorem]{Proposition}
\newtheorem{lemma}[theorem]{Lemma}
\theoremstyle{definition}
\newtheorem{example}[theorem]{Example}
\newtheorem{problem}[theorem]{Problem}
\theoremstyle{remark}
\newtheorem{remark}[theorem]{Remark}
\newcommand{\Z}{\mathbb{Z}}
\newcommand{\Q}{\mathbb{Q}}
\newcommand{\R}{\mathbb{R}}
\newcommand{\Ph}{\mathrm{Ph}}
\renewcommand{\lim}{\underset{\longleftarrow}{\mathrm{lim}}}
\newcommand{\limone}{\underset{\longleftarrow}{\mathrm{lim}}^1}
\title[Relative phantom maps and rational homotopy]{Relative phantom maps and rational homotopy}
\author{Daisuke Kishimoto}
\address{Department of Mathematics, Kyoto University, Kyoto, 606-8502, Japan}
\email{kishi@math.kyoto-u.ac.jp}
\author{Takahiro Matsushita}
\address{Department of Mathematical Sciences, University of the Ryukyus Nishihara-cho, Okinawa 903-0213, Japan}
\email{mtst@sci.u-ryukyu.ac.jp}
\subjclass[2010]{55P99}
\keywords{relative phantom map, phantom map, ratiotal homotopy}
\begin{document}

\baselineskip.525cm

\maketitle

\begin{abstract}
  We generalize some results of Gray and McGibbon-Roitberg  on relations between phantom maps and rational homotopy to relative phantom maps. Since the $\limone$ and the profinite completion techniques do not apply to relative phantom maps, we develop new techniques.
\end{abstract}


\section{Introduction}

Let $X$ be a CW-complex of finite type. A map $X\to Y$ is called a \emph{phantom map} if its restriction to each skeleton of $X$ is null-homotopic. The reader can find comprehensive results and techniques for phantom maps in the survey \cite{M}. Recently, motivated by the de Bruijn-Erd\H{o}s theorem on colorings of infinite graphs, Iriye and the authors \cite{IKM} introduced a relative version of phantom maps defined as follows: Let $X$ be a CW-complex of finite type and $\varphi\colon B\to Y$ be a map between spaces. A map $X\to Y$ is called a \emph{relative phantom map} from $X$ to $\varphi$ if its restriction to each skeleton of $X$ lifts to $B$ through $\varphi$, up to homotopy. By definition, usual phantom maps from $X$ to $Y$ are relative phantom maps from $X$ to $\varphi$, and if $B$ is a point, then relative phantom maps to $\varphi$ are usual phantom maps from $X$ to $Y$. Thus relative phantom maps are a natural generalization of phantom maps, and we call usual phantom maps {\it absolute phantom maps} to distinguish them.

Phantom maps are known to be deeply related with rational homotopy. Here we recall two important results on phantom maps and rational homotopy. At the very first stage of the study of phantom maps, Gray \cite{G} found the following relation between phantom maps and rational homotopy. A phantom map is called trivial if it is null-homotopic.

\begin{proposition}[Gray \cite{G}]
  \label{Gray}
  If there is a non-trivial phantom map $X\to Y$, then $H^n(X;\Q)\ne 0$ and $\pi_{n+1}(Y)\otimes\Q\ne 0$ for some integer $n\ge 1$.
\end{proposition}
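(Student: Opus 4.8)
The plan is to express the phantom maps as a derived inverse limit and then detect them on the rationalization of the target. First I would use the skeletal filtration $X=\bigcup_n X_n$ and the Milnor exact sequence
\[
\ast\to\limone\,[\Sigma X_n,Y]\to[X,Y]\to\lim\,[X_n,Y]\to\ast ,
\]
which identifies the phantom classes with $\limone\,[\Sigma X_n,Y]\cong\limone\,[X_n,\Omega Y]$, the bonding maps being restriction along $X_n\hookrightarrow X_{n+1}$. Thus a non-trivial phantom map exists if and only if this $\limone$ is non-trivial, and it suffices to prove that the tower $\{[X_n,\Omega Y]\}_n$ is Mittag--Leffler whenever the asserted rational conclusion fails.

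Second, I would reduce to a rational target. Using that phantom maps are insensitive to the finite part of $Y$, one has $\mathrm{Ph}(X,Y)\cong\mathrm{Ph}(X,Y_\Q)=\limone\,[X_n,\Omega Y_\Q]$; this is the step that genuinely relies on a localization/completion input and is precisely the ingredient with no analogue in the relative setting. The point of passing to $Y_\Q$ is that $\Omega Y_\Q$ is a rational H-space, hence a product of rational Eilenberg--MacLane spaces $\prod_m K(\pi_{m+1}(Y)\otimes\Q,m)$, giving a natural isomorphism of towers
\[
[X_n,\Omega Y_\Q]\;\cong\;\prod_m\mathrm{Hom}\!\big(H_m(X_n;\Q),\,\pi_{m+1}(Y)\otimes\Q\big),
\]
where the bonding map on the degree-$m$ factor is induced by $(X_n\hookrightarrow X_{n+1})_\ast$ on $H_m(-;\Q)$. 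In particular every term is now a $\Q$-vector space.

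Finally I would run the Mittag--Leffler argument in the contrapositive. Assume that for every $n\ge 1$ either $H^n(X;\Q)=0$ or $\pi_{n+1}(Y)\otimes\Q=0$ (equivalently, using $H^m(X;\Q)\cong H_m(X;\Q)^\ast$, that no degree-$m$ factor with $H_m(X;\Q)\ne 0$ and $\pi_{m+1}(Y)\otimes\Q\ne0$ survives). Since $H_m(X_n;\Q)=0$ for $m>n$ and equals $H_m(X;\Q)$ for $m<n$, the failed hypothesis forces every factor with $m\neq n$ to vanish, so $[X_n,\Omega Y_\Q]$ is supported in the top degree $m=n$; but the bonding map $[X_{n+1},\Omega Y_\Q]\to[X_n,\Omega Y_\Q]$ routes this factor through $H_n(X_{n+1};\Q)=H_n(X;\Q)=0$ and is therefore zero. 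A tower whose bonding maps are zero is Mittag--Leffler, so $\limone\,[X_n,\Omega Y_\Q]=\ast$ and hence $\mathrm{Ph}(X,Y)=\ast$, contradicting the hypothesis. I expect the decisive difficulty to be the reduction $\mathrm{Ph}(X,Y)\cong\mathrm{Ph}(X,Y_\Q)$: one must show that the torsion/finite part of the mapping groups contributes no phantom maps, and that the rational splitting of $\Omega Y_\Q$ is compatible with the bonding maps. Once this is secured, the remaining Mittag--Leffler step is formal.
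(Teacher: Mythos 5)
Your first step --- the Milnor sequence identification $\Ph(X,Y)\cong\limone[\Sigma X_n,Y]\cong\limone[X_n,\Omega Y]$ --- is correct and is the classical starting point. The fatal problem is your second step: the reduction $\Ph(X,Y)\cong\Ph(X,Y_\Q)$ is not a ``decisive difficulty'' to be secured later, it is false, and its failure destroys the whole strategy. By your own splitting, $[X_n,\Omega Y_\Q]\cong\prod_{m\le n}\mathrm{Hom}\bigl(H_m(X_n;\Q),\pi_{m+1}(Y)\otimes\Q\bigr)$ is a \emph{finite-dimensional} $\Q$-vector space for each $n$, and any tower of finite-dimensional vector spaces is automatically Mittag--Leffler (the decreasing chain of images inside a fixed term stabilizes by a dimension count). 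Hence $\Ph(X,Y_\Q)=\limone[X_n,\Omega Y_\Q]=*$ for \emph{every} finite type source $X$ and finite type target $Y$, with no hypothesis on rational homology or homotopy at all; this is visible in your last paragraph, where the contrapositive hypothesis is never actually needed to get Mittag--Leffler. So if the claimed isomorphism held, every phantom map would be trivial, contradicting, e.g., the classical fact that $\Ph(\C P^\infty,S^3)$ is uncountable; concretely, $S^3_\Q\simeq K(\Q,3)$ and $[\C P^\infty,K(\Q,3)]=H^3(\C P^\infty;\Q)=0$, so $\Ph(\C P^\infty,S^3_\Q)=*$ while $\Ph(\C P^\infty,S^3)\ne*$. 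The intuition that phantom maps are ``insensitive to the finite part of $Y$'' cannot be implemented this way: $Y\to Y_\Q$ is a rational equivalence, but $Y_\Q$ is not a finite type target, so Theorem \ref{McGibbon-Roitberg} (which in any case gives only surjectivity of $\Ph(X,Y)\to\Ph(X,Y')$, not injectivity) does not apply. More fundamentally, $\limone$ of a tower of finitely generated groups is not determined by the rationalized tower: for the tower $\cdots\xrightarrow{2}\Z\xrightarrow{2}\Z$ the $\limone$ is uncountable although the rationalized tower consists of isomorphisms. Transferring rational information back to the integral $\limone$ is exactly the hard content of Gray's theorem, and it is the step your proposal skips.

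For comparison, the paper never proves Proposition \ref{Gray} by a $\limone$ argument; it is recovered as the case $B=*$ of Proposition \ref{main 1}, whose proof in Section 4 runs on integral \emph{finiteness} rather than rational vanishing. Under the contrapositive hypothesis, each obstruction group $H^n(X;\pi_{n+1}(Y))$ is finite, so the sets $L_n$ of homotopy classes of lifts of a given phantom map up the connective-cover (Moore--Postnikov) tower of $*\to Y$ form a tower of non-empty finite sets; Lemma \ref{limit} (a compactness argument of K\"onig type) produces a compatible system of lifts, and Lemma \ref{Milnor seq} applied to $\lim Y\langle n\rangle\simeq *$ shows the map is null-homotopic. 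If you want to salvage a $\limone$-style proof, what you must prove in place of your reduction is a statement of McGibbon--Roitberg type controlling the integral tower $\{[X_n,\Omega Y]\}$ by rational data of $Y$ --- and that is essentially the theorem whose proof is the nontrivial input, not a formality.
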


The $\limone$ technique enables us to study phantom maps algebraically, and by applying the $\limone$ technique, McGibbon and Roitberg \cite{MR} proved the following. To state their theorem, we set terms and notation. A space $X$ is called a \emph{finite type source} if it is a connected CW-complex of finite type, and a space $Y$ is called a \emph{finite type target} if $Y$ is path-connected and $\pi_n(Y)$ is finitely generated for each $n$. Let $\Ph(X,Y)$ be the pointed homotopy set of phantom maps from $X$ to $Y$.

\begin{theorem}[McGibbon-Roitberg \cite{MR}]
  \label{McGibbon-Roitberg}
  Let $X$ be a finite type source and $Y$ and $Y'$ be finite type targets. If a map $f\colon Y\to Y'$ induces a surjection $f_*\colon\pi_*(Y)\otimes\Q\to\pi_*(Y')\otimes\Q$ for $*\ge 2$, then
  $$f_*\colon\Ph(X,Y)\to\Ph(X,Y')$$
  is surjective.
\end{theorem}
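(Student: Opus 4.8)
The plan is to reinterpret phantom maps through the classical Milnor $\limone$ description and to reduce the statement to a purely algebraic surjectivity of $\limone$ for a map of towers of groups. First I would recall that, for the skeletal filtration $X_1\subset X_2\subset\cdots$ of the finite type source $X$, the Milnor exact sequence identifies $\Ph(X,Y)$ with $\limone_n[\Sigma X_n,Y]$, naturally in $Y$: the phantom maps are exactly the classes dying in $\lim_n[X_n,Y]$, hence the image of the injection $\limone_n[\Sigma X_n,Y]\to[X,Y]$. Under this identification the map in question becomes $\limone_n f_*$ for the induced morphism of towers $f_*\colon\{[\Sigma X_n,Y]\}\to\{[\Sigma X_n,Y']\}$, so it suffices to prove that this map of $\limone$'s is surjective. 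Since each $\Sigma X_n$ is simply connected, every map out of it lifts to the universal cover; hence I may replace $Y,Y'$ by their universal covers and assume them simply connected. Their homotopy groups remain finitely generated and are unchanged in degrees $\ge 2$, so the rational surjectivity hypothesis is preserved.

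Writing $A_n=[\Sigma X_n,Y]$, $B_n=[\Sigma X_n,Y']$ and $\phi_n=f_*$, the heart of the argument is to show that the cokernel $C_n=\mathrm{coker}\,\phi_n$ is \emph{finite} for every $n$. Two inputs feed into this. First, because $\Sigma X_n$ is a finite complex and $Y,Y'$ have finitely generated homotopy groups, $A_n$ and $B_n$ are finitely generated (nilpotent) groups, so it is enough to check that $\phi_n$ is rationally surjective. Second, a simply connected suspension has the rational homotopy type of a wedge of spheres: $(\Sigma X_n)_\Q\simeq\bigvee_i S^{d_i}_\Q$ with the $d_i\ge 2$ indexing a basis of $\widetilde H_*(\Sigma X_n;\Q)$. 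Mapping into the rational target, and using that a finite wedge of spheres sees only homotopy groups, gives
\[
A_n\otimes\Q\;\cong\;[\Sigma X_n,Y_\Q]\;\cong\;\prod_i\pi_{d_i}(Y)\otimes\Q,
\]
and likewise for $B_n$, with $\phi_n\otimes\Q$ corresponding to the product of the maps $f_*\colon\pi_{d_i}(Y)\otimes\Q\to\pi_{d_i}(Y')\otimes\Q$. These are surjective by hypothesis (as $d_i\ge 2$), so $\phi_n\otimes\Q$ is surjective, $C_n$ is torsion, and being finitely generated it is finite.

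With the cokernels finite, surjectivity of $\limone$ is essentially formal. The tower $\{C_n\}$ consists of finite groups, so the descending chains of images $\mathrm{im}(C_m\to C_n)$ stabilize; that is, $\{C_n\}$ satisfies the Mittag-Leffler condition and $\limone_n C_n=0$. Factoring $\phi_n$ through its image $I_n$ and feeding the short exact sequences of towers $0\to I_n\to B_n\to C_n\to 0$ and $0\to\ker\phi_n\to A_n\to I_n\to 0$ into the six-term $\limone$ exact sequence, the vanishing of $\limone_n C_n$ forces $\limone_n I_n\to\limone_n B_n$ to be onto, while $\limone_n A_n\to\limone_n I_n$ is always onto; composing shows $\limone_n\phi_n$ is surjective, which is the desired conclusion.

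The main obstacle is the finiteness of $C_n$, and within it the identification $A_n\otimes\Q\cong\prod_i\pi_{d_i}(Y)\otimes\Q$: this is exactly where the structural hypothesis on $f$ is used, and it relies on the rational-wedge-of-spheres description of a suspension together with the localization fact $[\Sigma X_n,Y]\otimes\Q\cong[\Sigma X_n,Y_\Q]$ for finite complexes mapping into nilpotent finite type targets. A secondary technical point is that $\{A_n\}$ and $\{B_n\}$ are towers of groups that need not be abelian, so the six-term sequence and the cokernel factorization above must be run in the non-abelian $\limone$ formalism (exact sequences of pointed sets with group actions); the finiteness of $C_n$ and the Mittag-Leffler conclusion survive unchanged there, so this does not affect the strategy.
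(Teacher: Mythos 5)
This statement is not proved in the paper at all: it is quoted from McGibbon--Roitberg \cite{MR}, and the paper's only comment on its proof is that the key is the congruence $\Ph(X,Y)\cong\limone[\Sigma X^n,Y]$. Your proposal follows exactly that route, i.e.\ essentially the argument of \cite{MR}: identify $\Ph(X,Y)$ with $\limone[\Sigma X^n,Y]$ via the Milnor sequence (naturally in the target), pass to universal covers to make $Y,Y'$ simply connected (legitimate, since each $\Sigma X^n$ is simply connected), use the rational wedge-of-spheres decomposition of the suspension $\Sigma X^n$ together with the localization isomorphism $[\Sigma X^n,Y]\otimes\Q\cong[\Sigma X^n,Y_\Q]$ to see that the image $I_n$ of $\phi_n=f_*\colon[\Sigma X^n,Y]\to[\Sigma X^n,Y']$ has finite index, and conclude that $\limone\phi_n$ is onto. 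All of these steps are sound, and this is where the hypothesis on $f_*\colon\pi_*(Y)\otimes\Q\to\pi_*(Y')\otimes\Q$ enters, exactly as you say.

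The one step that does not work as literally written is the last one. You feed $0\to I_n\to B_n\to C_n\to 0$ into the six-term $\lim$--$\limone$ sequence, but $B_n=[\Sigma X^n,Y']$ is in general a non-abelian (nilpotent) group, the image $I_n$ need not be normal, and $C_n$ is then only a finite pointed coset set, not a group; the Bousfield--Kan six-term sequence requires an exact sequence of towers of groups (and its refinement, used elsewhere in this paper as Lemma \ref{action 6-term}, even requires central extensions), so your closing remark that the formalism ``survives unchanged'' is not automatic. The conclusion you need --- if $I_n$ has finite index in $B_n$ for every $n$, then $\limone A_n\to\limone B_n$ is onto --- is nevertheless true, and is proved by a compactness argument of precisely the kind this paper isolates as Lemma \ref{limit} (an inverse limit of non-empty finite sets is non-empty). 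Concretely: recall $\limone B_n$ is the orbit set of the action $(c_n)\cdot(b_n)=(c_nb_n\beta(c_{n+1})^{-1})$ of $\prod_nB_n$ on itself, where $\beta$ denotes the bonding maps. Given $(b_n)$, the coset spaces $F_n=\{I_nc\ \vert\ c\in B_n\}$ are non-empty finite sets, and $I_{n+1}c\mapsto I_n\beta(c)b_n^{-1}$ is a well-defined map $F_{n+1}\to F_n$ because $\beta(I_{n+1})\subset I_n$. A compatible sequence of cosets $(I_nc_n)\in\lim F_n$ gives $c_nb_n\beta(c_{n+1})^{-1}\in I_n$ for all $n$, so the class of $(b_n)$ lies in the image of $\prod_nI_n=\phi(\prod_nA_n)$, hence in the image of $\limone A_n$. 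With this replacement your proof is complete; without it, the appeal to the six-term sequence is a genuine (though repairable) gap.
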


\begin{remark}
  Proposition \ref{Gray} and Theorem \ref{McGibbon-Roitberg} are recovered by the result of Roitberg and Touhey \cite{RT} on a description of $\Ph(X,Y)$ using profinite completion.
\end{remark}

Our purpose is to generalize Proposition \ref{Gray} and Theorem \ref{McGibbon-Roitberg} to relative phantom maps. To state generalizations of the above results, we need the definition of the triviality of relative phantom maps. A relative phantom map $f \colon X \to Y$ from a CW-complex $X$ to a map $\varphi \colon B \to Y$ is called {\it trivial} if $f$ itself lifts to $B$, up to homotopy. When we regard an absolute phantom map $f \colon X \to Y$ as a relative phantom map to $* \hookrightarrow Y$, the triviality of relative phantom maps coincides with the triviality of absolute phantom maps.

In \cite{IKM}, the triviality of relative phantom maps is studied, and a criterion for the triviality of relative phantom maps to Postnikov sections is given in terms of rational homotopy, which is a partial generalization of Proposition \ref{Gray}. Thus it is possible that relative phantom maps are related with rational homotopy as well as absolute phantom maps. The aim of this paper is to verify such a relation by generalizing Proposition \ref{Gray} and Theorem \ref{McGibbon-Roitberg} to relative phantom maps. The key of the proofs of Proposition \ref{Gray} and Theorem \ref{McGibbon-Roitberg} is the congruence
$$\Ph(X,Y)\cong\limone[\Sigma X^n,Y]$$
and the key of the proof of the result of Roitberg and Touhey \cite{RT} which recovers Proposition \ref{Gray} and Theorem \ref{McGibbon-Roitberg} is the profinite completion technology. However, these do not apply to relative phantom maps, and so we must develop fairly new techniques.


Now we state the generalization of Proposition \ref{Gray} to relative phantom maps, which recovers the result of \cite{IKM} on Postnikov sections. Here we write $\pi_n(Y, B)$ to mean $\pi_n(M_\varphi, B)$, where $M_\varphi$ is the mapping cylinder of $\varphi \colon B \to Y$.

\begin{proposition}
  \label{main 1}
  Given a map $\varphi\colon B\to Y$, suppose that $\pi_1(B)$ acts trivially on $\pi_*(Y,B)$. If there is a non-trivial relative phantom map from $X$ to $\varphi$, then $H^n(X;\Q)\ne 0$ and $\pi_{n+1}(Y,B)\otimes\Q\ne 0$ for some integer $n\ge 1$.
\end{proposition}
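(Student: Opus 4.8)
The plan is to argue by contraposition: assuming that for every $n \ge 1$ one of $H^n(X;\Q)$ and $\pi_{n+1}(Y,B)\otimes\Q$ vanishes, I will show that every relative phantom map from $X$ to $\varphi$ is trivial, i.e. lifts to $B$ up to homotopy. First I would replace $\varphi$ by a fibration $p\colon E\to Y$ (using the mapping cylinder and path space), so that lifting a map $X\to Y$ through $\varphi$ up to homotopy is the same as lifting it through $p$, and so that the homotopy fiber $F$ of $\varphi$ fits into a fibration $F\to E\to Y$. Comparing the long exact sequences of the pair $(M_\varphi,B)$ and of this fibration gives $\pi_n(F)\cong\pi_{n+1}(Y,B)$, which translates the hypothesis into: for each $n$, $H^n(X;\Q)=0$ or $\pi_n(F)\otimes\Q=0$. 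A relative phantom map $f$ then becomes a map $X\to Y$ admitting a lift through $p$ on each skeleton $X^n$, and triviality is exactly the existence of a lift on all of $X$.

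Second, I would measure the failure to assemble the skeletal lifts into a global one. Choosing lifts $g_n\colon X^n\to E$ of $f|_{X^n}$, the restriction $g_{n+1}|_{X^n}$ and $g_n$ are two lifts of the same map, so they differ by a vertical homotopy class controlled by $[X^n,F]$; the obstruction to adjusting these choices so that they become compatible, and hence pass to a lift on $X=\bigcup_n X^n$, is a $\limone$-type term over the tower $\{[X^n,F]\}$ of difference groups. This is the step where the hypothesis that $\pi_1(B)$ acts trivially on $\pi_*(Y,B)\cong\pi_{*-1}(F)$ is essential: it guarantees that the relevant local coefficient systems are untwisted, so that the difference sets are honest abelian groups forming a tower and the obstruction lives in a well-defined inverse-limit group.

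Third, I would show this obstruction vanishes. Since $X^n$ is a finite complex and $F$ is of finite type, each $[X^n,F]$ is finitely generated, and rationally $[X^n,F]\otimes\Q$ splits into summands of the form $\mathrm{Hom}(H_k(X^n;\Q),\pi_k(F)\otimes\Q)$; thus the $\limone$ only sees the pairing of $H^n(X;\Q)$ with $\pi_n(F)\otimes\Q\cong\pi_{n+1}(Y,B)\otimes\Q$, which matches exactly the two groups appearing in the statement. Under the contrapositive hypothesis every such summand vanishes, so the tower is rationally trivial; the finitely generated, hence Mittag-Leffler-controlled, torsion part contributes nothing to $\limone$, and the obstruction vanishes. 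A compatible choice of skeletal lifts then assembles into a lift $X\to E\simeq B$, proving $f$ trivial.

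The main obstacle I anticipate is precisely this assembly step, for two reasons. The clean identification $\Ph(X,Y)\cong\limone[\Sigma X^n,Y]$ used classically is unavailable here, since the objects to be assembled are sets of lifts (torsors) rather than a tower of groups; making the inverse-limit formalism legitimate is exactly what forces the trivial-action hypothesis and constitutes the new technique replacing the $\limone$ and profinite-completion methods. A secondary point requiring care is the degree bookkeeping—verifying that the loop shift implicit in comparing lifts produces the pairing of $H^n$ with $\pi_{n+1}(Y,B)$ rather than a shifted one—together with checking that the rational splitting of $[X^n,F]$ interacts correctly with the inverse-limit structure.
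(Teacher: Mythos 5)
Your setup (contraposition, replacing $\varphi$ by a fibration $E\to Y$ with fiber $F$, $\pi_k(F)\cong\pi_{k+1}(Y,B)$) matches the paper, but the engine of your argument --- filtering the \emph{source} by skeleta and killing a $\limone$ of ``difference groups'' $[X^n,F]$ --- has a genuine gap at its core. The set of vertical homotopy classes of lifts of $f|_{X^n}$ through a fibration is not a torsor over $[X^n,F]$: by obstruction theory it is an iterated extension whose layers are subquotients of $H^k(X^n;\pi_k(F))$, $k\le n$, each defined only after choices made at the previous stage. So there is no tower of groups acting transitively on the tower of lift sets, and hence no well-defined ``$\limone$-type obstruction'' whose vanishing produces a compatible system. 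What is actually needed is non-emptiness of the inverse limit of the non-empty sets of lift classes, and the mechanism for that is compactness: a tower of non-empty \emph{finite} sets has non-empty limit (the paper's Lemma~\ref{limit}, a K\"onig-type argument). Your proposal never establishes finiteness of the lift sets; the $\limone$ vanishing you aim for addresses a different issue (assembling an already-compatible system into a map, which in a source filtration follows from the homotopy extension property and needs no $\limone$ input). Relatedly, ``finitely generated, hence Mittag-Leffler-controlled'' is false as a principle: towers of finitely generated groups can have uncountable $\limone$ --- this is exactly why non-trivial phantom maps exist --- and it is finiteness, not finite generation, that kills $\limone$.

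Moreover, even the finiteness that a compactness argument would require fails in your filtration. The hypothesis controls $H^k(X;\Q)$, but your difference groups over the $n$-skeleton involve $H^n(X^n;\pi_n(F))$, and $H^n(X^n;\Q)$ can be non-zero when $H^n(X;\Q)=0$ (e.g.\ $X=S^n\cup e^{n+1}\simeq\ast$ has $X^n=S^n$); so ``under the contrapositive hypothesis every summand vanishes'' is wrong in the top degree of each skeleton. This is precisely what the paper's choice of filtration avoids: it filters the \emph{target} by a principal Moore-Postnikov tower $\{B\to Z_m\to Y\}$ of $\varphi$ (this is where the $\pi_1(B)$-triviality enters, guaranteeing principality), takes $L_m$ to be the set of homotopy classes of lifts of the whole map $f\colon X\to Z_m$ (non-empty by Lemma~\ref{Moore-Postnikov}), and notes that the fibers of $[X,Z_m]\to[X,Z_{m-1}]$ are orbits of $H^m(X;\pi_{m+1}(Y,B))$ --- cohomology of all of $X$, hence finite under the hypothesis by Lemma~\ref{finite}. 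Lemma~\ref{limit} then yields a compatible system of lifts, and the Milnor sequence (Lemma~\ref{Milnor seq}) applied to $B\simeq\lim\,Z_m$ produces a map $X\to B$ exhibiting $f$ as trivial. To salvage your approach you would have to pass to image towers (lifts over $X^n$ that extend over $X^{n+1}$, using $H^n(X^{n+1};-)=H^n(X;-)$) and still run a K\"onig-type argument on these filtered, non-group-like sets --- at which point you have essentially reconstructed the paper's proof in less convenient coordinates.
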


To generalize Theorem \ref{McGibbon-Roitberg} to relative phantom maps to $\varphi\colon B\to Y$, there are two maps to be considered:
\begin{enumerate}
  \item the map $\Ph(X,\varphi)\to\Ph(X,f\circ\varphi)$ induced from $f\colon Y\to Y'$;
  \item the map $\Ph(X,\varphi\circ g)\to\Ph(X,\varphi)$ induced from $g\colon B'\to B$.
\end{enumerate}
However, the map (2) is not surjective in general when $g$ is a rational homotopy equivalence. Here we show such an example. In \cite{IKM}, a non-trivial relative phantom map $f\colon X(n)\to\R P^\infty$ to the inclusion $\R P^n\to\R P^\infty$ is constructed for $n\ge 3$, where $f$ is an isomorphism in $\pi_1$. Let $g\colon S^n\to\R P^n$ be the projection. For $n$ odd, $g$ is a rational homotopy equivalence, but the restriction of $f$ to each skeleton does not lift to $S^n$ through $\varphi\circ g$, up to homotopy, since $f$ is an isomorphism in $\pi_1$. Thus it is impossible to generalize Theorem \ref{McGibbon-Roitberg} with respect to the maps of type (2), so we will concentrate on the maps of type (1).

\begin{theorem}
  \label{main 2}
  Let $X$ be a finite type source, $B,Y$ and $Y'$ be simply-connected finite type targets, and $\varphi\colon B\to Y$ be a map. If $f\colon Y\to Y'$ is a rational homotopy equivalence, then
  $$f_*\colon\Ph(X,\varphi)\to\Ph(X,f\circ\varphi)$$
  is surjective.
\end{theorem}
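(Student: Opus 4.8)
The plan is to reduce the surjectivity of $f_*$ to a lifting problem for the fibration attached to $f$, and then to solve that problem skeleton by skeleton, exploiting the finiteness that the rational-equivalence hypothesis forces on the fiber. First I would replace $f\colon Y\to Y'$ by a fibration and let $F$ be its homotopy fiber. Since $Y$ and $Y'$ are simply-connected finite type targets and $f$ is a rational homotopy equivalence, the long exact homotopy sequence gives $\pi_n(F)\otimes\Q=0$ with $\pi_n(F)$ finitely generated, so $F$ is connected with $\pi_n(F)$ \emph{finite} for every $n$. Moreover, as $Y'$ is simply-connected the fibration has trivial monodromy, so all the obstruction theory below uses untwisted coefficients, even though $X$ need not be simply-connected.

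Next, given a class in $\Ph(X,f\circ\varphi)$ represented by $h'\colon X\to Y'$, I would rephrase ``$h$ lifts $h'$ through $f$'' as ``$h$ is a section of the pulled-back fibration $(h')^*Y\to X$ with fiber $F$.'' For each $n$ let $\Gamma(X^n)$ be the space of sections over the skeleton $X^n$ (equivalently, homotopy classes of lifts of $h'|_{X^n}$ through $f$), and put $S_n=\pi_0\Gamma(X^n)$. Because $X^n$ is a finite complex and $\pi_*(F)$ is finite, the set $S_n$ is \emph{finite}. Inside $S_n$ I would single out the subset $T_n$ of those lifts $\ell$ of $h'|_{X^n}$ that themselves lift through $\varphi$; this is well defined since lifting through $\varphi$ is invariant under homotopy, and restriction carries $T_{n+1}$ into $T_n$, so $(T_n)_n$ is an inverse system of finite sets. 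The relative phantom hypothesis on $h'$ is exactly what makes each $T_n$ nonempty: a skeletal lift $b_n\colon X^n\to B$ of $h'|_{X^n}$ through $f\circ\varphi$ produces $\varphi\circ b_n$, which is a lift of $h'|_{X^n}$ through $f$ and visibly lifts through $\varphi$, so $[\varphi\circ b_n]\in T_n$.

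Since an inverse system of nonempty finite sets has nonempty inverse limit, I may choose an element of $\lim T_n\subseteq\lim S_n$. The tower $\{\Gamma(X^n)\}$ is a tower of fibrations with $\Gamma(X)=\lim_n\Gamma(X^n)$, so the Milnor exact sequence
$$\ast\to\limone\pi_1(\Gamma(X^n))\to\pi_0\Gamma(X)\to\lim_n S_n\to\ast$$
shows in particular that $\pi_0\Gamma(X)\to\lim_n S_n$ is surjective. Lifting the chosen element of $\lim T_n$ back to a global section yields $h\colon X\to Y$ which, being a section of $(h')^*Y$, satisfies $f\circ h\simeq h'$, and whose restriction $h|_{X^n}$ lies in $T_n$, hence lifts through $\varphi$, for every $n$. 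Thus $h$ is a relative phantom map from $X$ to $\varphi$ with $f_*[h]=[h']$, which is the desired surjectivity.

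I expect the main obstacle to be the passage from compatible \emph{skeletal} lifts to a genuine \emph{global} lift of $h'$: one must ensure that $f\circ h\simeq h'$ holds on the nose rather than merely up to an absolute phantom map, and this is precisely why I would work with section spaces of $(h')^*Y$ rather than with the homotopy sets $[X^n,Y]$ directly, so that the Milnor sequence delivers an honest section. The second delicate point is the finiteness of $S_n$, which is the only place rationality enters: without it $F$ would carry infinite homotopy and the towers $(S_n)$, $(T_n)$ could fail the Mittag–Leffler condition, so this finiteness is the engine that replaces the $\limone$ and profinite-completion techniques that are unavailable for relative phantom maps.
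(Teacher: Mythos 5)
Your argument is correct, but it takes a genuinely different route from the paper's. The paper never passes to the homotopy fiber of $f$: it fixes a principal Moore--Postnikov tower $\{Y\to W_n\to Y'\}$ of $f$, proves Lemma~\ref{fiber inclusion} to push a relative phantom map through a single homotopy fibration $Y\to Y'\to Z$ with $Z$ rationally contractible, applies that lemma inductively up the tower, and then assembles the resulting maps $g_n\colon X\to W_n$ into $\tilde g\colon X\to Y$ by the Milnor sequence, finally checking via the bijection $[X^n,Y]\cong[X^n,W_{n+1}]$ that $\tilde g$ is still a relative phantom map to $\varphi$. Moreover, the paper's stagewise lemma begins by quoting the absolute Theorem~\ref{McGibbon-Roitberg} (through Lemma~\ref{rationally contractible}) to produce an initial lift of $h$ through $f$, and only afterwards corrects it. Your proof needs no Moore--Postnikov tower and no appeal to Theorem~\ref{McGibbon-Roitberg}: converting lifts through $f$ into sections of $(h')^*Y\to X$ concentrates the whole rational hypothesis in one fact ($\pi_*(F)$ is finite), after which the same two engines the paper uses---compactness of towers of nonempty finite sets (Lemma~\ref{limit}) and Milnor-type surjectivity (Lemma~\ref{Milnor seq})---finish the job in a single step; this is shorter and more self-contained. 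The price is that you invoke, without proof, the fibered analogues of Lemma~\ref{finite} and Lemma~\ref{Milnor seq}: finiteness of the set of vertical homotopy classes of sections over a finite complex when the fiber has finite homotopy groups in all degrees, and surjectivity of $\pi_0\Gamma(X)\to\lim\pi_0\Gamma(X^n)$ for a tower of section-space fibrations. Both are standard (your trivial-monodromy remark is exactly what makes the first routine, and the second follows from the usual tower-of-fibrations argument), and the paper grants itself the same license for the absolute versions, so this is acceptable; note only that the $\limone\pi_1$ term in your displayed sequence makes sense only after choosing a compatible system of basepoints, though you use just the basepoint-free surjectivity. One last cosmetic point: $\Ph(X,\varphi)$ is a pointed homotopy set while your sections and homotopies are free; since $Y$ and $Y'$ are simply connected and $F$ is connected, pointed and free classes agree here, so this is harmless but deserves a sentence.
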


By assuming $X$ to be a suspension, we can strengthen Theorem \ref{main 2}.

\begin{theorem}
  \label{main 3}
  Let $X$ be a finite type source, $B,Y$ and $Y'$ be simply-connected finite type targets, and $\varphi\colon B\to Y$ be a map. If $X$ is a suspension and $f\colon Y\to Y'$ induces a surjection $f_*\colon\pi_*(Y)\otimes\Q\to\pi_*(Y')\otimes\Q$, then
  $$f_*\colon\Ph(X,\varphi)\to\Ph(X,f\circ\varphi)$$
  is surjective.
\end{theorem}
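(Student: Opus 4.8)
The plan is to imitate the proof of Theorem \ref{main 2}, isolating the one place where the hypothesis that $f$ is a rational homotopy equivalence is consumed and replacing it with an appeal to the suspension hypothesis. Concretely, given a relative phantom map $w\colon X\to Y'$ to $f\circ\varphi$, I want to produce a relative phantom map $u\colon X\to Y$ to $\varphi$ with $f\circ u\simeq w$. Since $w$ is a relative phantom map to $f\circ\varphi$, for each $n$ the restriction $w|_{X^n}$ factors up to homotopy as $f\circ\varphi\circ b_n$ for some $b_n\colon X^n\to B$, so the maps $\varphi\circ b_n\colon X^n\to Y$ are skeletal lifts of $w$ through $f$ that \emph{automatically} satisfy the relative phantom condition. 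The entire difficulty is to assemble these skeletal lifts into a single map $X\to Y$, that is, to kill the coherence defect measuring the failure of the $\varphi\circ b_n$ to agree on overlaps; this defect takes values in the homotopy of the homotopy fiber $F$ of $f$.

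The role of the suspension hypothesis is to linearize the rational part of this coherence problem. Because $X$ is a simply-connected suspension of finite type, its rationalization $X_\Q$ is a wedge of rational spheres, so for any simply-connected finite type target $Z$ the set $[X,Z_\Q]\cong[X_\Q,Z_\Q]$ is assembled, through a Milnor exact sequence, from the linear data $\prod_n\mathrm{Hom}_\Q(\tilde H_n(X;\Q),\pi_n(Z)\otimes\Q)$ together with a $\limone$ term of nilpotent rational groups. Since $\mathrm{Hom}_\Q(V,-)$, products, and $\limone$ all preserve surjections in this range, the hypothesis that $f_*\colon\pi_*(Y)\otimes\Q\to\pi_*(Y')\otimes\Q$ is surjective propagates to a surjection $f_*\colon[X,Y_\Q]\to[X,Y'_\Q]$. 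This is exactly the property of $f$ that a rational homotopy inverse supplies in the proof of Theorem \ref{main 2}, where it is used to lift $w$ through $f$ after rationalization. Thus, although $f$ need no longer be invertible rationally, the suspension hypothesis recovers the single consequence of the rational-equivalence assumption that the argument of Theorem \ref{main 2} actually uses.

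With this substitution in place, I would run the remainder as in Theorem \ref{main 2}: use the rational lift of $w$ together with the skeletal lifts $\varphi\circ b_n$ to trivialize, one skeleton at a time, the $F$-valued coherence obstruction, and then pass to the limit to obtain an honest map $u\colon X\to Y$ lifting $w$ and satisfying the relative phantom condition by construction. The main obstacle is precisely this final assembly step: since the identification $\Ph(X,Y)\cong\limone[\Sigma X^n,Y]$ is unavailable in the relative setting, coherence across all skeleta cannot simply be read off from a single derived limit and must be controlled directly. I expect the suspension linearization and the rational surjectivity above to be exactly what force the relevant obstruction tower to be Mittag--Leffler, its successive cokernels becoming finite once the rational contribution has been split off, so that the assembly goes through and yields the desired preimage of $w$.
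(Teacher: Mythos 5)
Your proposal has a genuine gap, and it sits exactly where you defer to a hope in your final sentences: the assembly step. Moreover, the premise of your reduction is incorrect. Surjectivity of $f_*\colon[X,Y_\Q]\to[X,Y'_\Q]$ (which your wedge-of-rational-spheres argument does yield for a suspension $X$) is \emph{not} the consequence of the rational-equivalence hypothesis that the proof of Theorem \ref{main 2} actually consumes. That proof never rationalizes anything: what it uses is that the relative groups $\pi_*(Y',Y)$ are torsion when $f$ is a rational equivalence, so the fibers of a principal Moore--Postnikov tower of $f$ are rationally contractible, hence (Lemma \ref{finite}) the indeterminacy sets such as $[X^n,\Omega Z]$ are \emph{finite}, and Lemma \ref{limit} (a tower of non-empty finite sets has non-empty limit) assembles the skeletal data. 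Under your weaker hypothesis this mechanism is simply unavailable: for the projection $f\colon S^3\times S^5\to S^3$, the map $f_*$ is rationally surjective but the fiber of $f$ is $S^5$, so the groups in which your coherence defect lives have free summands, Lemma \ref{limit} does not apply, and a tower of non-empty infinite sets can have empty inverse limit. Your closing claim (``I expect \dots\ Mittag--Leffler \dots\ finite once the rational contribution has been split off'') is precisely the content of the theorem asserted without proof: there is no natural splitting of these towers into rational and torsion parts compatible with the structure maps, and no argument is given that the relevant tower is Mittag--Leffler.

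The paper's proof takes a genuinely different route and does not attempt to patch Theorem \ref{main 2}. It uses the suspension hypothesis structurally, via the adjunction $\Ph(\Sigma A,\varphi)\cong\Ph(A,\Omega\varphi)$ (Proposition \ref{adjoint}), so that all targets become loop spaces and all homotopy sets become groups. It then establishes an exact sequence of pointed sets (Proposition \ref{exact seq})
$$[A,\Omega B]\to\Ph(A,\Omega\varphi)\to\Ph(A,F)\to\limone[A,\Omega Z_n],$$
where $F$ is the homotopy fiber of $\varphi$ (not of $f$) and $\{Z_n\}$ is a Moore--Postnikov tower of $\varphi$. Comparing this sequence for $\varphi$ and for $f\circ\varphi$, the rational hypothesis enters only through the classical Theorem \ref{McGibbon-Roitberg}, applied to the \emph{absolute} phantom sets to obtain surjectivity of $\Ph(A,F)\to\Ph(A,F')$; the $\limone$ terms are controlled by the Milnor sequence (both towers have limit $B$, which forces the map $g$ between them to satisfy $g^{-1}(*)=*$); and a four-lemma for pointed sets with group actions (Lemma \ref{4-lemma}) concludes. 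In other words, the paper converts the relative problem into an absolute one, where the classical $\limone$ technology and McGibbon--Roitberg's theorem are available, rather than re-running the finiteness argument of Theorem \ref{main 2}. That conversion---isolating an absolute phantom set inside the relative one on which the classical theorem can act---is the idea missing from your proposal.
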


The paper is organized as follows. In Section 2, we construct new relative phantom maps from old by applying Theorem \ref{main 2}. In Section 3, we describe the set of relative phantom maps $\Ph(X,\varphi)$ in terms of the limit, instead of $\limone$, of a certain tower of sets given by Moore-Postnikov tower of $\varphi$. In Section 4, we give proofs of the main results by using the description of $\Ph(X,\varphi)$ given in Section 3. In Section 5, we pose further problems on relative phantom maps and rational homotopy.

\textit{Acknowledgement:} The authors were supported in part by JSPS KAKENHI No. 17K05248 and No. 19K14536.


\section{Examples}


One can construct a relative phantom map from an non-trivial absolute phantom map, which is neither absolute nor trivial. Let $g\colon X\to Z$ be a non-trivial absolute phantom map and $B$ be a non-contractible space. Then the map $1\times g\colon B\times X\to B\times Z$ is a relative phantom map to the inclusion $B\to B\times Z$, which is neither absolute nor trivial. By applying Theorem \ref{main 2} to this construction, one gets new relative phantom maps to maps into H-spaces, which are neither absolute nor trivial.

\begin{proposition}
  \label{example 1}
  Let $Y$ be a simply-connected finite H-space of rank $\ge 2$. Then for some map $\varphi\colon B\to Y$, there is a relative phantom map to $\varphi$ which is neither absolute nor trivial.
\end{proposition}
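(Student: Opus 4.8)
The plan is to realize the product construction described above inside the given finite $H$-space $Y$ by exploiting its rational splitting. First I would invoke Hopf's theorem: since $Y$ is a simply-connected finite $H$-space, $H^*(Y;\Q)$ is an exterior algebra on odd-degree generators, so $Y$ is rationally equivalent to a product of odd spheres $S^{2n_1-1}\times\cdots\times S^{2n_r-1}$ with all $2n_i-1\ge 3$, and the hypothesis $\mathrm{rank}\,Y\ge 2$ gives $r\ge 2$. Set $Z:=S^{2n_1-1}$ and let $B:=S^{2n_2-1}\times\cdots\times S^{2n_r-1}$ be the product of the remaining factors; since $r\ge 2$, the space $B$ is non-contractible, and $B$, $Z$, $B\times Z$ are all simply-connected finite type targets. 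Because $\pi_{2n_i-1}(Y)\otimes\Q\ne 0$ for each $i$, one may choose maps $a_i\colon S^{2n_i-1}\to Y$ representing rational generators and define $f\colon B\times Z\to Y$ as the iterated $H$-space product of the $a_i$; this $f$ is a rational homotopy equivalence. Finally, since $\pi_{2n_1-1}(Z)\otimes\Q\ne 0$, there is a finite type source $X$ admitting a non-trivial absolute phantom map $g\colon X\to Z$ (classically, e.g. $Z=S^3$, $X=\C P^\infty$).

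With these choices I would run the construction preceding the proposition. Let $\iota\colon B\hookrightarrow B\times Z$ be the inclusion $b\mapsto(b,*)$ and put $\varphi:=f\circ\iota\colon B\to Y$, a map into the $H$-space $Y$. The map $1\times g\colon B\times X\to B\times Z$ is a relative phantom map to $\iota$ which is neither absolute nor trivial: its restriction to $B\times\{x_0\}$ equals $\iota$, which is essential, and its projection to $Z$ is $g\circ\mathrm{pr}_X$, which is essential, so it neither vanishes on a skeleton nor factors through $\iota$. Applying Theorem \ref{main 2} to the rational homotopy equivalence $f$, the map
$$f_*\colon\Ph(B\times X,\iota)\to\Ph(B\times X,\varphi)$$
is surjective, and $f\circ(1\times g)$ is a relative phantom map to $\varphi$. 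It is not absolute: its restriction to the finite subcomplex $B\times\{x_0\}$ is $\varphi=f\circ\iota$, which is rationally essential because $\tilde H^*(B;\Q)\ne 0$ (here $r\ge 2$ is used) and $f$ is a rational equivalence; hence $f\circ(1\times g)$ does not vanish on this subcomplex and is not an absolute phantom map.

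The hard part will be showing that $f\circ(1\times g)$ is not trivial, i.e. does not lift to $B$ through $\varphi$. A rational argument cannot settle this, since $g$ --- being phantom --- is rationally null, so $f\circ(1\times g)$ and any would-be lift agree after rationalization; the obstruction is a genuinely phantom (torsion) phenomenon living in the homotopy fiber of $f$, which has finite homotopy groups. My approach would be to deduce non-triviality from the non-triviality of $1\times g$ by showing that $f_*$ reflects the trivial class: a lift of $f\circ(1\times g)$ through $\varphi=f\circ\iota$ should, via the limit description of $\Ph(B\times X,-)$ obtained from the Moore-Postnikov tower in Section 3 together with the surjectivity in Theorem \ref{main 2}, force a lift of $1\times g$ through $\iota$, contradicting the construction. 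Making this reflection precise --- controlling the discrepancy measured by the finite-homotopy fiber of $f$ against the phantom map $g$ --- is the crux, and I expect it to be the step where the machinery of Sections 3 and 4 is essential rather than formal.
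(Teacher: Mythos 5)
Your construction orients the rational equivalence $f$ \emph{into} $Y$ (from the product of spheres), and you push the easy example $1\times g$ forward along it. That orientation is exactly what produces the gap you acknowledge at the end. Being a relative phantom map is preserved by composition, so $f\circ(1\times g)$ is indeed a relative phantom map to $\varphi=f\circ\iota$ (note that the surjectivity of $f_*\colon\Ph(B\times X,\iota)\to\Ph(B\times X,\varphi)$ you invoke from Theorem \ref{main 2} plays no role here --- pushforward exists for free), and your non-absoluteness argument via the restriction to $B\times\{x_0\}$ is fine. But to get non-triviality you need $f_*$ to \emph{reflect} triviality: if $f\circ(1\times g)$ lifts through $f\circ\iota$, then $1\times g$ lifts through $\iota$. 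Nothing in the paper provides this. Theorem \ref{main 2} asserts surjectivity of $f_*$, which says nothing about which elements of the target are trivial; and since $f$ is only a rational equivalence, a would-be lift $\ell$ with $f\circ\iota\circ\ell\simeq f\circ(1\times g)$ cannot be ``composed with $f^{-1}$'' --- the discrepancy lives in the (finite-homotopy) fiber of $f$, and phantom maps are precisely the maps invisible to such finite data, so there is no soft argument. This is a genuine missing step, not a formality that the machinery of Sections 3--4 supplies.

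The paper sidesteps this by orienting the rational equivalence the other way, which is where the hypothesis that $Y$ is a finite H-space earns its keep: by $0$-universality (Mimura--Toda \cite{MT}) there is a rational equivalence $f\colon Y\to S^{2n_1-1}\times\cdots\times S^{2n_r-1}$ \emph{out of} $Y$. One takes $\varphi\colon S^{2n_1-1}\to Y$ to be a restriction of the equivalence going into $Y$, arranged (after composing with rational self-equivalences) so that $f\circ\varphi$ misses the factor $S^{2n_2-1}$, builds the explicit relative phantom map $\bar h=(f\circ\varphi)\times h$ to $f\circ\varphi$ inside the product, where its non-triviality and non-absoluteness are elementary, and only then applies Theorem \ref{main 2} to choose a preimage $\tilde h$ with $f\circ\tilde h\simeq\bar h$. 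With this orientation only the easy implications are needed: if $\tilde h$ were trivial (resp.\ absolute), its pushforward $\bar h$ would be trivial (resp.\ absolute), a contradiction. Triviality is only ever \emph{pushed forward} along $f$, never reflected. To repair your argument you would need the same pivot: use $0$-universality (which your proposal never invokes) to get a rational equivalence out of $Y$, and place your explicit example on the target side of $f_*$, so that surjectivity of $f_*$, rather than an unproved reflection property, does the work.
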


\begin{proof}
  Since $Y$ is a finite H-space, there is a rational homotopy equivalence
  $$g\colon S^{2n_1-1}\times\cdots\times S^{2n_r-1}\to Y,$$
  where $n_1,\ldots,n_r\ge 2$ and $r\ge 2$. Since $Y$ is 0-universal in the sense of \cite{MT}, there is also a rational homotopy equivalence
  $$f\colon Y\to S^{2n_1-1}\times\cdots\times S^{2n_r-1}.$$
  Let $\varphi\colon S^{2n_1-1}\to Y$ be the restriction of $g$. {By composing rational self-equivalences of $S^{2n_1-1}$ and $S^{2n_1-1}\times\cdots\times S^{2n_r-1}$ with $\varphi$ and $f$ if necessary, we may assume that the composite
  $$S^{2n_1-1}\xrightarrow{\varphi}Y\xrightarrow{f}S^{2n_1-1}\times\cdots\times S^{2n_r-1}\to S^{2n_i-1}$$
  is non-trivial for $i=1$ and trivial for $i=2$. Then $f\circ\varphi$ is non-trivial in homology and is thought of as a map into $S^{2n_1-1}\times S^{2n_3-1}\times\cdots\times S^{2n_r-1}$.} As in \cite{M}, there is a non-trivial absolute phantom map $h\colon Z\to S^{2n_2-1}$ since $n_2\ge 2$. {Then one can define a map $\bar{h}=(f\circ\varphi)\times h\colon S^{2n_1-1}\times Z\to S^{2n_1-1}\times\cdots\times S^{2n_r-1}$ which is a relative phantom map to $f\circ\varphi$. Consider a commutative diagram
  $$\xymatrix{&S^{2n_1-1}\ar[r]\ar[d]^{f\circ\varphi}&\ast\ar[d]\\
  Z\ar[r]^(.25){\bar{h}\vert_Z}&S^{2n_1-1}\times\cdots\times S^{2n_r-1}\ar[r]&S^{2n_2-1}.}$$
  If $\bar{h}$ is a trivial relative phantom map, then $\bar{h}\vert_Z$ is also a trivial relative phantom map. So the composite of the bottom maps is a trivial absolute phantom map. But the composite of the bottom maps is $h$ which is a non-trivial absolute phantom map, a contradiction. Thus $\bar{h}$ is a non-trivial relative phantom map. Since $f\circ \varphi$ is non-trivial in homology, $\bar{h}$ is not absolute.}

  Since $f$ is a rational homotopy equivalence, it follows from Theorem \ref{main 2} that the induced map $f_*\colon\Ph(X,\varphi)\to\Ph(X,f\circ\varphi)$ is surjective. In particular, there is a relative phantom map $\tilde{h}\colon S^{2n_1-1}\times Z\to Y$ to $\varphi$ such that {$f\circ\tilde{h}\simeq\bar{h}$. Since $\bar{h}$ is neither trivial nor absolute, so is $\tilde{h}$.} Thus the proof is done.
\end{proof}

\begin{remark}
  As in \cite{MT}, one can moderate the condition on $Y$ in Proposition \ref{example 1} to that $Y$ is a simply-connected space having the homotopy type of a finite complex and a rational homotopy equivalence $S^{2n_1-1}\times\cdots\times S^{2n_r-1}\to Y$ for $r\ge 2$.
\end{remark}

\begin{example}
  {Let $G$ be a compact, simply-connected Lie group of rank $\ge 2$ and $\varphi\colon S^n\to G$ be any map of infinite order in $\pi_n$. There is always such a map $\varphi$ for $n=3$. Note that $\varphi$ extends to a rational homotopy equivalence $S^n\times P\to G$, where $P$ is a product of spheres. Then by Proposition \ref{example 1} and its proof, there is a relative phantom map to some non-zero multiple of $\varphi$ which is neither trivial nor absolute.}
\end{example}

Let $g\colon X\to Z$ be a non-trivial phantom map. If $B$ is not contractible, then the map $1\vee g\colon B\vee X\to B\vee Z$ is a relative phantom map to the inclusion $B\to B\vee Z$, which is neither absolute nor trivial. Recall that a simply-connected co-H-space has the rational homotopy type of a wedge of spheres. Moreover, they are 0-universal as in \cite{MT}. Then one can get the co-H-space analogue of Proposition \ref{example 1} by a quite similar proof.

\begin{proposition}
  \label{example 2}
  Let $Y$ be a simply-connected co-H-space having the homotopy type of a finite complex such that $\dim\widetilde{H}_*(Y;\Q)\ge 2$. Then for some map $\varphi\colon B\to Y$, there is a relative phantom map to $\varphi$ which is neither absolute nor trivial.
\end{proposition}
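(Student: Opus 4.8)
The plan is to mirror the proof of Proposition \ref{example 1}, replacing the H-space/product structure by the co-H-space/wedge structure throughout. Since $Y$ is a simply-connected co-H-space with the homotopy type of a finite complex, it has the rational homotopy type of a wedge of spheres, and being $0$-universal in the sense of \cite{MT}, there are rational homotopy equivalences
$$g\colon S^{n_1}\vee\cdots\vee S^{n_r}\to Y\qquad\text{and}\qquad f\colon Y\to S^{n_1}\vee\cdots\vee S^{n_r},$$
where $n_1,\dots,n_r\ge 2$ (as $Y$ is simply-connected) and $r\ge 2$ (as $\dim\widetilde{H}_*(Y;\Q)\ge 2$). I would take $\varphi\colon S^{n_1}\to Y$ to be the restriction of $g$ to the first wedge summand, so that $B=S^{n_1}$ is a simply-connected finite type target.

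Then, arguing exactly as in Proposition \ref{example 1} but with the pinch maps $\mathrm{pinch}_i\colon S^{n_1}\vee\cdots\vee S^{n_r}\to S^{n_i}$ in place of the projections, I would pre-compose $\varphi$ and post-compose $f$ with rational self-equivalences so that $\mathrm{pinch}_i\circ f\circ\varphi$ is non-trivial for $i=1$ and null-homotopic for $i=2$; in particular $f\circ\varphi$ is non-trivial in rational homology. Taking a non-trivial absolute phantom map $h\colon Z\to S^{n_2}$ (which exists since $n_2\ge 2$), I would then form
$$\bar{h}=(f\circ\varphi)\vee h\colon S^{n_1}\vee Z\to S^{n_1}\vee\cdots\vee S^{n_r}.$$
This $\bar{h}$ is a relative phantom map to $f\circ\varphi$: on the summand $S^{n_1}$ it is $f\circ\varphi$ itself, which lifts through $f\circ\varphi$ via the identity, while on each skeleton of $Z$ it agrees with $h$, which is null-homotopic and hence lifts trivially.

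The non-triviality and non-absoluteness of $\bar{h}$ follow by the same two arguments as before. For non-triviality, I would use the commutative diagram obtained by pinching to $S^{n_2}$: were $\bar{h}$ trivial, its restriction to $Z$ would lift through $f\circ\varphi$, and composing with $\mathrm{pinch}_2$ would exhibit $h$ as null-homotopic, since $\mathrm{pinch}_2\circ f\circ\varphi\simeq *$ and $\mathrm{pinch}_2$ restricts to the identity on $S^{n_2}$, contradicting the choice of $h$. For non-absoluteness, the restriction of $\bar{h}$ to $S^{n_1}$ is $f\circ\varphi$, which is non-trivial in homology and so not null-homotopic on that skeleton. Finally, since $f$ is a rational homotopy equivalence between simply-connected finite type targets, Theorem \ref{main 2} (applied to the finite type source $X=S^{n_1}\vee Z$) gives a relative phantom map $\tilde{h}\colon S^{n_1}\vee Z\to Y$ to $\varphi$ with $f\circ\tilde{h}\simeq\bar{h}$; as $\bar{h}$ is neither trivial nor absolute, neither is $\tilde{h}$.

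The step I expect to be the main obstacle is the arrangement of the pinch components in the second paragraph: making $\mathrm{pinch}_2\circ f\circ\varphi$ genuinely null-homotopic, rather than merely rationally trivial, since $\pi_{n_1}(S^{n_2})$ may contain torsion and, by Hilton's theorem, $\pi_{n_1}(S^{n_1}\vee\cdots\vee S^{n_r})$ carries Whitehead-product contributions beyond the evident summands. I would handle this exactly as in Proposition \ref{example 1}, by choosing the distinguished summands (for instance taking $n_1$ minimal) together with the rational self-equivalences so that the $S^{n_2}$-component vanishes on the nose, after which the remainder of the argument is formally identical to the H-space case.
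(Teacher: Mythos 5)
Your proposal is correct and is essentially the paper's own argument: the paper obtains Proposition \ref{example 2} by exactly this dualization of the proof of Proposition \ref{example 1} (wedge sums and pinch maps in place of products and projections, using that a simply-connected finite co-H-space is rationally a wedge of spheres and $0$-universal in the sense of \cite{MT}, then applying Theorem \ref{main 2} to pull $\bar{h}$ back along $f$). Your worry about the component-arrangement step is handled correctly by your own suggestion of taking $n_1$ minimal: then $\pi_{n_1}(S^{n_i})=0$ for $n_i>n_1$ and the Hilton summands of weight $\ge 2$ vanish in degree $n_1$ for dimension reasons, so the required vanishing of the $S^{n_2}$-component can be achieved on the nose by an integral shearing self-equivalence of the wedge (which, unlike in the product case, is unobstructed since maps out of a wedge are free).
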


\begin{remark}
  As well as Proposition \ref{example 1}, one can moderate the condition on $Y$ in Proposition \ref{example 2} to that $Y$ is a simply-connected space having the homotopy type of a finite complex and a rational homotopy equivalence $Y\to S^{2n_1-1}\vee\cdots\vee S^{2n_r-1}$ for $r\ge 2$. See \cite{MT}.
\end{remark}


\section{Moore-Postnikov tower}

Recall that a Moore-Postninkov tower of a map $\varphi\colon B\to Y$ between path-connected spaces is a homotopy commutative diagram
$$\xymatrix{&\vdots\ar[d]\\
&Z_3\ar[d]\ar[ddr]\\
&Z_2\ar[d]\ar[dr]\\
B\ar[r]\ar[ur]\ar[uur]&Z_1\ar[r]&Y}$$
satisfying the following conditions:
\begin{enumerate}
  \item the composite $B\to Z_n\to Y$ is homotopic to $\varphi$ for each $n$;
  \item $B\to Z_n$ induces an isomorphism on $\pi_i$ for $i<n$ and a surjection for $i=n$;
  \item $Z_n\to Y$ induces an isomorphism on $\pi_i$ for $i>n$ and an injection for $i=n$;
  \item $Z_{n+1}\to Z_n$ is a fibration with fiber $K(\pi_{n+1}(Y,B),n)$.
\end{enumerate}
For example, a Moore-Postnikov tower of a map $X\to *$ is a Postnikov tower of $X$, and a Moore-Postnikov tower of a map $*\to X$ is a tower of connective covers of $X$.

If $B$ and $Y$ are connected CW-complexes, then any map $\varphi\colon B\to Y$ has a Moore-Postnikov tower. Moore-Postnikov towers are natural with respect to the underlying maps if they exist. A Moore-Postnikov tower is called {\it principal} if each $Z_{n+1}\to Z_n$ is a principal fibration. The following is well known. See \cite{Ha}, for example.

\begin{lemma}
  Given a map $\varphi\colon B\to Y$ between connected CW-complexes, if $\pi_1(B)$ acts trivially on $\pi_*(Y,B)$, then there is a principal Moore-Postnikov tower of $\varphi$.
\end{lemma}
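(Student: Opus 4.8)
The plan is to take the Moore--Postnikov tower of $\varphi$, whose existence for connected CW-complexes is recalled just above, and to show that under the stated hypothesis each fibration $Z_{n+1}\to Z_n$ can be refined to a principal one. This mirrors the classical argument that a simple space admits a principal Postnikov tower (see \cite{Ha}), with the relative groups $\pi_n(Y,B)$ and the $\pi_1(B)$-action playing the roles that $\pi_n(X)$ and the $\pi_1(X)$-action play in the absolute case. Thus the existence of the tower is not at issue; the whole content is the principality of the stages.

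The key step is the recognition that each stage is induced from a path-loop fibration. A fibration with fiber $K(\pi,n)$ over a connected CW-complex $Z_n$ is principal -- that is, classified by a map $k_n\colon Z_n\to K(\pi,n+1)$ whose homotopy fiber is $Z_{n+1}$ -- provided $\pi_1(Z_n)$ acts trivially on $\pi_n$ of the fiber, so that the associated local system is trivial and the $k$-invariant lives in the ordinary cohomology group $H^{n+1}(Z_n;\pi_{n+1}(Y,B))$ rather than its twisted analogue. I would establish this triviality as follows. By condition (2) the map $B\to Z_n$ is surjective on $\pi_1$, so $\pi_1(Z_n)$ is a quotient of $\pi_1(B)$ (an isomorphism once $n\ge 2$); and by the defining properties of the tower, together with the long exact sequences of the pair $(Y,B)$ and of the fibration $Z_{n+1}\to Z_n$, the monodromy action of $\pi_1(Z_n)$ on $\pi_n(\mathrm{fiber})=\pi_{n+1}(Y,B)$ is induced from the given action of $\pi_1(B)$ on $\pi_{n+1}(Y,B)$, which is trivial by hypothesis. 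With the action trivial, I take $k_n\in H^{n+1}(Z_n;\pi_{n+1}(Y,B))$ as $k$-invariant, replace $Z_{n+1}$ by the homotopy fiber of the corresponding map $Z_n\to K(\pi_{n+1}(Y,B),n+1)$, and check it has the correct homotopy type by comparing long exact sequences. Performing this refinement compatibly up the tower -- so that the principal stage $Z_{n+1}$ is built over the already-refined $Z_n$ -- yields the desired principal Moore--Postnikov tower.

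I expect the main obstacle to be the precise identification of the two $\pi_1$-actions in the second step: one must verify that the monodromy of $\pi_1(Z_n)$ on the homotopy of the Eilenberg--MacLane fiber of $Z_{n+1}\to Z_n$ is naturally the same as the prescribed action of $\pi_1(B)$ on the relative homotopy group $\pi_{n+1}(Y,B)$, and not merely the same up to an automorphism. This is a naturality and bookkeeping argument threading the long exact sequences through the tower, but it is exactly where the hypothesis enters, so getting the comparison canonical is the crux; once it is in place, principality of each stage is the standard classification of fibrations with Eilenberg--MacLane fiber and trivial coefficients.
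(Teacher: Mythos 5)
The paper gives no proof of this lemma at all---it is stated as well known with a pointer to Hatcher, and your outline is essentially a reconstruction of that standard argument (Hatcher, Theorem 4.71): refine each stage of the Moore--Postnikov tower using the classification of fibrations with Eilenberg--MacLane fiber and trivial monodromy, the hypothesis entering exactly where you say it does, through the comparison of the $\pi_1(B)$-action on $\pi_{n+1}(Y,B)$ with the monodromy action via the surjections $\pi_1(B)\to\pi_1(Z_{n+1})\to\pi_1(Z_n)$ and the natural isomorphisms $\pi_{n+1}(Y,B)\cong\pi_{n+1}(Z_n,B)\cong\pi_{n+1}(Z_n,Z_{n+1})\cong\pi_n(\mathrm{fiber})$. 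Your sketch is correct; the only point worth adding is that at the bottom stage one needs $\pi_2(Y,B)$ to be abelian for $K(\pi_2(Y,B),2)$ (hence the $k$-invariant) to exist, which follows from the Peiffer identity $b^{-1}ab=a^{\partial b}$ in the crossed module $\pi_2(Y,B)\to\pi_1(B)$ once the action is assumed trivial.
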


We describe the set of relative phantom maps $\Ph(X,\varphi)$ in terms of the limit, instead of $\limone$, of a tower of sets given by a Moore-Postnikov tower of $\varphi$.

\begin{lemma}
  \label{Moore-Postnikov}
  Let $X$ be a CW-complex and $\{B\to Z_n\to Y\}_{n\ge 1}$ be a Moore-Postnikov tower of a map $\varphi\colon B\to Y$. Then $f\colon X\to Y$ is a relative phantom map to $\varphi$ if and only if it lifts to $Z_n$ for all $n\ge 1$.
\end{lemma}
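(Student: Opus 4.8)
The plan is to prove the two implications separately, extracting from the defining properties of the Moore-Postnikov tower two connectivity facts. From the property that $B\to Z_n$ induces an isomorphism on $\pi_i$ for $i<n$ and a surjection for $i=n$, the long exact sequence of the fibration shows that its homotopy fiber is $(n-1)$-connected; equivalently $B\to Z_n$ is $n$-connected, so the induced map $[W,B]\to[W,Z_n]$ is surjective whenever $\dim W\le n$. From the property that $Z_n\to Y$ induces an isomorphism on $\pi_i$ for $i>n$ and an injection for $i=n$, the same kind of long exact sequence shows that its homotopy fiber $F_n$ satisfies $\pi_i(F_n)=0$ for all $i\ge n$. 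These two facts, together with the property that $B\to Z_n\to Y$ is homotopic to $\varphi$, are all that the argument will use.

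For the ``if'' direction, suppose $f$ lifts to $Z_n$ for every $n$, and fix a skeleton $X^k$. Choose a lift $h_k\colon X\to Z_k$ with $(Z_k\to Y)\circ h_k\simeq f$, and restrict to obtain $h_k|_{X^k}\colon X^k\to Z_k$. Since $\dim X^k=k$ and $B\to Z_k$ is $k$-connected, the map $h_k|_{X^k}$ lifts through $B\to Z_k$ to some $g\colon X^k\to B$. Composing with $Z_k\to Y$ and using that $B\to Z_k\to Y$ is homotopic to $\varphi$ gives $\varphi\circ g\simeq (Z_k\to Y)\circ h_k|_{X^k}\simeq f|_{X^k}$, so $f|_{X^k}$ lifts to $B$ through $\varphi$. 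As $k$ is arbitrary, $f$ is a relative phantom map.

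For the ``only if'' direction, suppose $f$ is a relative phantom map and fix $n$; I must produce a lift of $f$ to $Z_n$. Since $f|_{X^n}$ lifts to $B$ through $\varphi$, say by $g_n\colon X^n\to B$ with $\varphi\circ g_n\simeq f|_{X^n}$, the composite $(B\to Z_n)\circ g_n\colon X^n\to Z_n$ is a lift of $f|_{X^n}$ up to homotopy. After replacing $Z_n\to Y$ by a fibration and applying the homotopy lifting property, I may rectify this to a genuine partial lift of $f$ defined over $X^n$. I then extend this partial lift over the higher skeleta of $X$ one cell at a time: the obstruction to extending across an $(m+1)$-cell is the homotopy class of the resulting map of the attaching sphere into the fiber $F_n$, an element of $\pi_m(F_n)$. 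Since $\pi_m(F_n)=0$ for every $m\ge n$, each such obstruction vanishes, and the lift extends successively over $X^{n+1},X^{n+2},\dots$, hence over all of $X$. This produces $h\colon X\to Z_n$ with $(Z_n\to Y)\circ h\simeq f$.

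The principal point to get right is the second connectivity fact, that the homotopy fiber of $Z_n\to Y$ has vanishing homotopy groups in all degrees $\ge n$, since it is exactly this vanishing that lets the single partial lift over $X^n$ propagate over the entire complex without further obstruction, and it is what ultimately replaces the $\limone$ bookkeeping by a plain inverse limit. I expect the only delicate book-keeping to be the rectification of the homotopy $\varphi\circ g_n\simeq f|_{X^n}$ into a strict partial lift and the consistent choice of basepoints in the fibers when running the cellwise extension. Because the relevant fiber homotopy groups vanish outright, the argument never needs the hypothesis that $\pi_1(B)$ act trivially on $\pi_*(Y,B)$: there is no local-coefficient obstruction class to interpret, as each cellwise obstruction lives in a group that is simply zero.
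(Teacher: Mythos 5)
Your proposal is correct and follows essentially the same route as the paper: both directions are handled by obstruction theory, extending the composite $X^n\to B\to Z_n$ to a full lift of $f$ using the vanishing of $\pi_m$ of the fiber of $Z_n\to Y$ for $m\ge n$, and conversely compressing a restricted lift $X^k\to Z_k$ into $B$ using the $k$-connectivity of $B\to Z_k$. The only (immaterial) difference is indexing --- the paper compresses $X^{n-1}$ through $Z_n$ where you compress $X^k$ through $Z_k$ at the edge of the connectivity range --- and your observation that no hypothesis on the $\pi_1(B)$-action is needed, since the obstruction groups vanish outright, is consistent with the paper, which imposes that hypothesis only later for principal towers.
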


\begin{proof}
  Let $f^n\colon X^n\to B$ be a lift of $f\vert_{X^n}$. There is no obstruction to extend the composite $X^n\xrightarrow{f^n}B\to Z_n$ to a lift of $f$, and so $f$ lifts to $Z_n$ for all $n\ge 1$ whenever it is a relative phantom map to $\varphi$. Let $g^n\colon X\to Z_n$ be a lift of $f$. There is no obstruction to lift $g^n$ to a map $X^{n-1}\to B$. Then $f$ has a lift $X\to Z_n$ for each $n$ whenever it is a relative phantom map to $\varphi$.
\end{proof}

\begin{proposition}
  \label{Ph}
  Let $X$ be a CW-complex and $\{B\to Z_n\xrightarrow{\varphi_n}Y\}_{n\ge 1}$ be a Moore-Postnikov tower of a map $\varphi\colon B\to Y$. Then
  $$\Ph(X,\varphi)\cong\lim\,\varphi_*([X,Z_n]).$$
\end{proposition}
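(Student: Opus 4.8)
The plan is to note that, when regarded as subsets of $[X,Y]$, the tower $\{\varphi_*([X,Z_n])\}_{n\ge 1}$ has inclusions for its bonding maps, so that its inverse limit is the intersection $\bigcap_{n\ge 1}\varphi_*([X,Z_n])$, and then to identify this intersection with $\Ph(X,\varphi)$ by means of Lemma \ref{Moore-Postnikov}.

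First I would pin down the tower structure. Writing $p_n\colon Z_{n+1}\to Z_n$ for the projections, condition (1) of a Moore-Postnikov tower gives $\varphi_{n+1}\simeq\varphi_n\circ p_n$, hence
$$\varphi_*([X,Z_{n+1}])=(\varphi_n)_*(p_n)_*([X,Z_{n+1}])\subseteq(\varphi_n)_*([X,Z_n])=\varphi_*([X,Z_n]).$$
Thus each $\varphi_*([X,Z_{n+1}])$ sits inside $\varphi_*([X,Z_n])$ in the common set $[X,Y]$, and the bonding maps of the tower are the inclusions. Since the inverse limit of a nested family of subsets of a fixed set, with inclusions as bonding maps, is their intersection, we obtain
$$\lim\,\varphi_*([X,Z_n])=\bigcap_{n\ge 1}\varphi_*([X,Z_n]).$$

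Next I would unwind both sides. A class $[f]\in[X,Y]$ lies in the right-hand side exactly when $f$ admits a lift $X\to Z_n$ along $\varphi_n$, up to homotopy, for every $n\ge 1$; by Lemma \ref{Moore-Postnikov} this is precisely the condition that $f$ be a relative phantom map to $\varphi$. As being a relative phantom map depends only on the pointed homotopy class of $f$, the set $\Ph(X,\varphi)$ is a well-defined subset of $[X,Y]$, and the displayed intersection is therefore identified with it; the constant map is the basepoint on both sides, so this is an isomorphism of pointed sets.

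I expect no serious obstacle, since the geometric content has already been isolated in Lemma \ref{Moore-Postnikov}, and what remains is the formal identification of the inverse limit of a nested family with its intersection. The only points requiring care are verifying that the bonding maps are genuinely inclusions---which relies on the homotopy commutativity in condition (1)---and confirming that $\Ph(X,\varphi)$ is honestly a subset of $[X,Y]$ rather than merely a set of maps.
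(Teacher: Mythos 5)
Your proposal is correct and follows exactly the route the paper intends: the paper's proof is the single line ``This immediately follows from Lemma \ref{Moore-Postnikov},'' and your argument—identifying the tower $\{\varphi_*([X,Z_n])\}$ as a nested family of subsets of $[X,Y]$ with inclusion bonding maps, so that its limit is the intersection, which Lemma \ref{Moore-Postnikov} identifies with $\Ph(X,\varphi)$—is precisely the routine verification being elided. The care you take with well-definedness of the bonding maps and with $\Ph(X,\varphi)$ being a genuine subset of $[X,Y]$ is sound but standard.
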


\begin{proof}
  This immediately follows from Lemma \ref{Moore-Postnikov}.
\end{proof}


\section{Proofs}

We first prove a simple but useful lemma.

\begin{lemma}
  \label{limit}
  Let $\cdots\to F_3\xrightarrow{f_3}F_2\xrightarrow{f_2}F_1$ be a tower of non-empty finite sets. Then $\lim\,F_n\ne\emptyset$.
\end{lemma}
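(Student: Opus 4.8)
The plan is to reduce to the case of a tower with surjective bonding maps, where a compatible sequence can be built directly, and to achieve this reduction by passing to the \emph{stable images} of the given tower.

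First I would fix, for each $n$, the descending chain of subsets
$$F_n \supseteq \mathrm{im}(F_{n+1}\to F_n) \supseteq \mathrm{im}(F_{n+2}\to F_n)\supseteq\cdots$$
of the finite set $F_n$, where $F_{n+k}\to F_n$ denotes the composite $f_{n+1}\circ\cdots\circ f_{n+k}$. Since $F_n$ is finite and each image is non-empty (because $F_{n+k}\ne\emptyset$), this chain stabilizes, and I set $\bar{F}_n$ to be its intersection, which is therefore non-empty. The bonding maps clearly restrict to maps $f_{n+1}\colon\bar{F}_{n+1}\to\bar{F}_n$, since an element of $\mathrm{im}(F_{n+1+k}\to F_{n+1})$ is sent into $\mathrm{im}(F_{n+1+k}\to F_n)$.

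The key step, and the main obstacle, is to show that each restricted map $f_{n+1}\colon\bar{F}_{n+1}\to\bar{F}_n$ is \emph{surjective}. Given $y\in\bar{F}_n$, I would consider for each $k$ the set of preimages of $y$ lying in $\mathrm{im}(F_{n+1+k}\to F_{n+1})$. Each such set is non-empty because $y$ lies in $\mathrm{im}(F_{n+1+k}\to F_n)$ and that image factors through $F_{n+1}$; moreover these sets form a descending chain inside the finite set $f_{n+1}^{-1}(y)$. Hence their intersection is non-empty, and any element of it lies in $\bar{F}_{n+1}$ and maps to $y$, which gives surjectivity.

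Once surjectivity is established the conclusion is immediate: choosing any $x_1\in\bar{F}_1$ and lifting successively through the surjections $\bar{F}_{n+1}\to\bar{F}_n$ produces a compatible sequence $(x_n)$ with $f_{n+1}(x_{n+1})=x_n$, hence an element of $\lim\,F_n$. I expect the surjectivity of the stabilized tower to be where the real content lies; everything preceding it is bookkeeping about descending chains in finite sets, and everything following it is a routine successive-lifting argument.
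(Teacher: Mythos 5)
Your proof is correct, but it takes a genuinely different route from the paper's. You reduce to a tower of surjections by passing to the stable images $\bar{F}_n=\bigcap_k \mathrm{im}(F_{n+k}\to F_n)$ --- the classical Mittag--Leffler argument --- and your key surjectivity step is sound: for $y\in\bar{F}_n$ the sets $f_{n+1}^{-1}(y)\cap\mathrm{im}(F_{n+1+k}\to F_{n+1})$ form a descending chain of non-empty finite sets, so their intersection meets $\bar{F}_{n+1}$. The paper instead runs a K\"onig-lemma-style infinite pigeonhole argument: it repeatedly selects, at each stage, an element whose preimage captures infinitely many of a previously chosen family $\{y_{n_i}\}$, and extracts a compatible subsequence directly, never modifying the tower. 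The two arguments have comparable length and elementarity, but they buy different things: your stabilization produces a structurally useful object (a subtower with surjective bonding maps, which is exactly the Mittag--Leffler condition and is what one needs elsewhere to conclude $\limone=\ast$), and it cleanly separates ``bookkeeping'' from ``lifting''; the paper's argument is more hands-on and avoids introducing the auxiliary subtower, at the cost of a somewhat fussier bookkeeping of nested infinite subsequences. Either proof would serve the paper's purposes, since Lemma \ref{limit} is only ever invoked to produce a single compatible sequence.
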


\begin{proof}
  Since all $F_n$ are finite, for some $x_1\in F_1$ there is an infinite sequence $1<n_1<n_2<\cdots$ such that there is $y_{n_i}\in(f_{n_i}\circ\cdots\circ f_2)^{-1}(x_1)\subset F_{n_i}$ for each $i$. Fix such elements $y_{n_1},y_{n_2},\ldots$. Since all $F_n$ are finite, there is $x_{n_1}\in(f_{n_1}\circ\cdots\circ f_2)^{-1}(x_1)$ such that there are infinitely many $n_i>n_1$ satisfying $y_{n_i}\in(f_{n_i}\circ\cdots\circ f_{n_1+1})^{-1}(x_{n_1})$. Similarly, there is $x_{n_2}\in(f_{n_2}\circ\cdots\circ f_{n_1+1})^{-1}(x_{n_1})$ such that there are infinitely many $n_i>n_2$ satisfying $y_{n_i}\in(f_{n_i}\circ\cdots\circ f_{n_2+1})^{-1}(x_{n_2})$. Repeating this procedure, we get a sequence $x_{n_1},x_{n_2},\ldots$ such that $(f_{n_i+1}\circ\cdots f_{n_{i+1}})(x_{n_{i+1}})=x_{n_i}$ for each $i$. From this sequence one easily gets an element of $\lim\,F_n$.
\end{proof}

We recall the Milnor exact sequence of a tower of fibrations (cf. \cite[Chapter IX, Corollary 3.2]{BK}).

\begin{lemma}
  \label{Milnor seq}
  Let $\cdots\to Y_3\to Y_2\to Y_1$ be a tower of fibrations. Then there is an exact sequence of pointed sets
  $$*\to\limone[X,\Omega Y_n]\to[X,\lim Y_n]\to\lim[X,Y_n]\to*.$$
\end{lemma}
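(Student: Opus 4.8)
The plan is to realize $\lim Y_n$ as a homotopy fiber and then feed the resulting fibration into the exact sequence for pointed homotopy classes. First I would observe that, because $\cdots\to Y_2\to Y_1$ is a tower of fibrations, the canonical map from the homotopy inverse limit to the ordinary inverse limit $\lim Y_n$ is a weak equivalence; thus I may work with the homotopy limit throughout. The homotopy limit of a tower is the homotopy equalizer of the two self-maps
$$\textstyle\prod_n Y_n\rightrightarrows\textstyle\prod_n Y_n$$
given by the identity and by the shifted projection $s(y_n)_n=(p_n(y_{n+1}))_n$, where $p_n\colon Y_{n+1}\to Y_n$ denote the structure maps. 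Converting this equalizer into a homotopy fiber, I obtain a fibration sequence
$$\lim_n Y_n\longrightarrow\textstyle\prod_n Y_n\xrightarrow{\ d\ }\textstyle\prod_n Y_n,$$
whose fiber over the basepoint is $\lim Y_n$ and in which $d$ is the based ``difference'' map comparing $y_n$ with $p_n(y_{n+1})$ coordinatewise through the path fibration.

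Next I would apply the functor $[X,-]$ to this fibration and invoke the standard Barratt--Puppe exact sequence of pointed sets
$$[X,\Omega\textstyle\prod_n Y_n]\xrightarrow{d_*}[X,\Omega\textstyle\prod_n Y_n]\to[X,\lim_n Y_n]\to[X,\textstyle\prod_n Y_n]\xrightarrow{d_*}[X,\textstyle\prod_n Y_n].$$
Since mapping out of $X$ preserves products and $\Omega$ commutes with products, the outer terms rewrite as $\prod_n[X,Y_n]$ and $\prod_n[X,\Omega Y_n]$. Under these identifications $d_*$ on $\prod_n[X,Y_n]$ is the map sending $(u_n)$ to the coordinatewise comparison of $u_n$ with $(p_n)_*u_{n+1}$, so its basepoint-fiber is precisely the set of compatible sequences, namely $\lim[X,Y_n]$. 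Exactness then shows that the image of $[X,\lim Y_n]\to\prod_n[X,Y_n]$ is exactly $\lim[X,Y_n]$, which is the surjectivity of the right-hand map $[X,\lim Y_n]\to\lim[X,Y_n]$.

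It remains to identify the basepoint-fiber of $[X,\lim Y_n]\to\lim[X,Y_n]$. By exactness at $[X,\lim Y_n]$, this fiber is the orbit set of $\prod_n[X,\Omega Y_n]$ under the action induced by $d_*$ at the loop level, where I use that each $[X,\Omega Y_n]$ is a group under loop concatenation. The orbit set for this action is by definition $\limone[X,\Omega Y_n]$, and the resulting map $\limone[X,\Omega Y_n]\to[X,\lim Y_n]$ is injective since its source is identified with the single basepoint-fiber. Assembling these identifications yields the asserted exact sequence.

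The step I expect to be the main obstacle is this last one: pinning down the $\limone$ term. Throughout one has only exactness of pointed sets, not of groups, so I must track the $\prod_n[X,\Omega Y_n]$-action carefully---its handedness, its basepoint, and the precise formula $(g_n)\cdot(x_n)=(g_n\,x_n\,(p_n)_*(g_{n+1})^{-1})$---and check that the orbit set it produces coincides with the standard definition of $\limone$ for the tower of (possibly non-abelian) groups $\{[X,\Omega Y_n]\}$. Verifying that the homotopy limit agrees with $\lim Y_n$ for a tower of fibrations, and that the homotopy equalizer converts into the fibration $d$ with the correct basepoints, are the remaining technical points, but they are formal once the model is fixed.
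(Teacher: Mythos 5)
The paper never proves this lemma: it is recalled as a known result with a pointer to \cite[Chapter IX, Corollary 3.2]{BK}, so the only meaningful comparison is with that standard argument --- and yours is essentially it. You identify $\lim Y_n$ with the homotopy limit of the tower (valid because the structure maps are fibrations), present the homotopy limit as the homotopy equalizer of the identity and the shift on $\prod_n Y_n$, apply $[X,-]$ to the resulting fibration, and identify the basepoint fiber with the orbit set defining $\limone$ of the tower of groups $\{[X,\Omega Y_n]\}$. That is the proof in the cited source, so in strategy there is nothing to compare; the proposal is a correct reconstruction.

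One formulation does need repair, though the proof survives it. There is no actual map $d\colon\prod_n Y_n\to\prod_n Y_n$ ``comparing $y_n$ with $p_n(y_{n+1})$'': points of a general space cannot be subtracted, so $\lim Y_n$ is not the fiber of any self-map of the product, and correspondingly ``the basepoint-fiber of $d_*$ on $\prod_n[X,Y_n]$'' is not meaningful, as $[X,Y_n]$ is only a pointed set. What does exist is the fibration $E\to\prod_n Y_n$, where $E$ is the homotopy equalizer (the pullback of the free-path fibration along the pair (identity, shift)) and the fiber is $\prod_n\Omega Y_n$. Surjectivity of $[X,\lim Y_n]\to\lim[X,Y_n]$ then comes not from exactness of a five-term Puppe sequence of $d$, but from the lifting criterion for this pullback: a pointed map $u\colon X\to\prod_n Y_n$ lifts to $E$ precisely when $u_n$ and $p_n\circ u_{n+1}$ are pointed homotopic for every $n$, i.e.\ precisely when $(u_n)$ represents an element of $\lim[X,Y_n]$. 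The genuine difference map exists only after looping, where the loop multiplication provides it; it is the connecting map $\Omega\prod_n Y_n\to\prod_n\Omega Y_n$ of the fibration, and it is exactly what your last step needs: the induced action of $\prod_n[X,\Omega Y_n]$ on itself has orbits equal to the fibers of $\prod_n[X,\Omega Y_n]\to[X,E]$, which yields both exactness and the injectivity of $\limone[X,\Omega Y_n]\to[X,\lim Y_n]$, and its orbit set is $\limone$ by definition, up to the handedness check you yourself flag. With that adjustment the argument is complete.
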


The following fact is well known, so we omit the proof.

\begin{lemma}
  \label{finite}
  If $X$ is a finite complex and $Z$ is rationally contractible, then $[X,Z]$ is a finite set.
\end{lemma}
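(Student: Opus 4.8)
The plan is to reduce the problem to a finite Postnikov stage by obstruction theory and then to induct up that stage, using that a rationally contractible finite type space has finite homotopy groups. Set $d=\dim X$. In the setting of this paper $Z$ is a finite type target, so each $\pi_n(Z)$ is finitely generated; together with rational contractibility $\pi_n(Z)\otimes\Q=0$ this forces every $\pi_n(Z)$ to be a finite group. First I would replace $Z$ by its $d$-th Postnikov section $Z[d]$, the space with $\pi_i(Z[d])=\pi_i(Z)$ for $i\le d$ and $\pi_i(Z[d])=0$ for $i>d$. The homotopy fibre $F$ of the section map $Z\to Z[d]$ is $d$-connected, so the induced map $[X,Z]\to[X,Z[d]]$ is a bijection: the obstructions to lifting a map along $Z\to Z[d]$ lie in $H^{i+1}(X;\pi_i(F))$ and those to a homotopy between two lifts lie in $H^i(X;\pi_i(F))$, and all of these vanish because $\pi_i(F)=0$ for $i\le d$ while $H^k(X;-)=0$ for $k>d$. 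It therefore suffices to prove that $[X,Z[d]]$ is a finite set.

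Next I would climb the (principal, since $Z$ is simply-connected) Postnikov tower $*=Z[1]\leftarrow Z[2]\leftarrow\cdots\leftarrow Z[d]$, in which each map $Z[i]\to Z[i-1]$ is a fibration with fibre $K(\pi_i(Z),i)$. The key input is that for a finite complex $X$ and a finite abelian group $A$ one has
$$[X,K(A,n)]\cong H^n(X;A),$$
which is a finite set because the cellular cochain complex of $X$ consists of finitely generated groups and the coefficients $A$ are finite. For a principal fibration the induced map $[X,Z[i]]\to[X,Z[i-1]]$ has image of size at most $|[X,Z[i-1]]|$, and each non-empty fibre is a single orbit of the action of $H^i(X;\pi_i(Z))\cong[X,K(\pi_i(Z),i)]$, hence of size at most $|H^i(X;\pi_i(Z))|$. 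Therefore
$$\bigl|[X,Z[i]]\bigr|\le\bigl|[X,Z[i-1]]\bigr|\cdot\bigl|H^i(X;\pi_i(Z))\bigr|.$$
Starting from the single point $[X,Z[1]]$ and inducting on $i$, every stage multiplies the count by a finite factor, so $[X,Z[d]]$ is finite, and with it $[X,Z]$.

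The main obstacle is less a single hard step than a matter of bookkeeping: one must keep careful track of base points and of the coefficient systems in the obstruction groups so that both the reduction to $Z[d]$ and the orbit-counting in the inductive step are valid. This is precisely why the simply-connectedness (or at least nilpotence) of $Z$ is convenient, as it guarantees a principal Moore--Postnikov tower and untwisted coefficients; in the genuinely non-nilpotent case the argument would require more care, but that situation does not arise in our applications. The only other point needing attention is the finiteness of the homotopy groups, which, as noted, rests on reading the finite type hypothesis on $Z$ from the surrounding setup rather than from the bare statement of the lemma.
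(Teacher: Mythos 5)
The paper gives no proof for you to be measured against: Lemma \ref{finite} is declared well known and its proof is omitted. So the only question is whether your argument stands on its own, and it does. The truncation $[X,Z]\cong[X,Z[d]]$ for $d=\dim X$ is a standard consequence of the $(d+1)$-connectivity of $Z\to Z[d]$, and the orbit count $\bigl|[X,Z[i]]\bigr|\le\bigl|[X,Z[i-1]]\bigr|\cdot\bigl|H^i(X;\pi_i(Z))\bigr|$ along a principal Postnikov tower is exactly the technique the paper itself runs in its proof of Proposition \ref{main 1}, so your route is consonant with the paper's toolkit. Moreover, the two caveats you flag are necessary, not cosmetic: the bare statement is false without them. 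Without finite type, take $Z=K(\Q/\Z,n)$, which is rationally contractible while $[S^n,Z]\cong\Q/\Z$ is infinite; without nilpotence, take $Z$ a finite acyclic complex with infinite fundamental group (e.g.\ the aspherical presentation complex of Higman's group), which has trivial reduced rational homology while $[S^1,Z]\cong\pi_1(Z)$ is infinite. One precision worth making: where the paper actually invokes the lemma, $Z$ is either an Eilenberg--MacLane space (proof of Proposition \ref{main 1}) or a loop space $\Omega Z$ (Lemma \ref{fiber inclusion}), and a loop space need not be simply connected; but both are \emph{simple} spaces, and simplicity (or nilpotence) is all your argument needs --- the Postnikov tower is still principal, and the bottom stage contributes $[X,K(\pi_1(Z),1)]\cong H^1(X;\pi_1(Z))$, which is still finite.

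A simplification you might prefer: once you know every $\pi_n(Z)$ is finite, you can induct over the cells of $X$ instead of the Postnikov stages of $Z$. If $X=X'\cup_\alpha e^n$, the cofibration sequence $S^{n-1}\xrightarrow{\alpha}X'\to X$ together with the coaction $X\to X\vee S^n$ shows that the fibres of the restriction map $[X,Z]\to[X',Z]$ are orbits of $[S^n,Z]=\pi_n(Z)$, whence $\bigl|[X,Z]\bigr|\le\bigl|[X',Z]\bigr|\cdot\bigl|\pi_n(Z)\bigr|$, and induction over the finitely many cells of $X$ finishes the proof. This avoids the principal Postnikov tower, hence all of the nilpotence and local-coefficient bookkeeping your last paragraph worries about, and it establishes the statement for any connected $Z$ with finite homotopy groups.
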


\begin{proof}
  [Proof of Proposition \ref{main 1}]
  Assume that for each $n \ge 1$, $H^n(X;\Q)=0$ or $\pi_{n+1}(Y,B)\otimes\Q=0$. Let $f$ be a relative phantom map from $X$ to $\varphi$. Let $\{B\to Z_n\to Y\}_{n\ge 1}$ be a principal Moore-Postnikov tower of $\varphi\colon B\to Y$ and $L_n$ be the pointed homotopy set of maps $X\to Z_n$ which are lifts of $f$. Since $f$ is a relative phantom map, $L_n$ is non-empty for each $n$ by Lemma \ref{Moore-Postnikov}. By assumption, there is a homotopy fibration $Z_n\to Z_{n-1}\to K(\pi_{n+1}(Y,B),n+1)$, and so one gets the Puppe exact sequence
  $$[X,K(\pi_{n+1}(Y,B),n)]\to[X,Z_n]\to[X,Z_{n-1}]\to[X,K(\pi_{n+1}(Y,B),n+1)]$$
  such that the inverse image of any element of $[X,Z_{n-1}]$ in $[X,Z_n]$ is an orbit of the action of $[X,K(\pi_{n+1}(Y,B),n)]$ on $[X,Z_n]$. By assumption and Lemma \ref{finite}, $[X,K(\pi_{n+1}(Y,B),n)]\cong H^n(X;\pi_{n+1}(Y,B))$ is finite, and so the inverse image of any element of $[X,Z_{n-1}]$ in $[X,Z_n]$ is finite. In particular, $L_n$ is finite for each $n$. Then one gets a tower of non-empty finite sets $\cdots\to L_3\to L_2\to L_1$, and thus by Lemma \ref{limit}, $\lim\,L_n\ne\emptyset$, or equivalently, there is a lift $f_n\colon X\to Z_n$ of $f$ for each $n$ such that the composite $X\xrightarrow{f_n}Z_n\to Z_{n-1}$ is homotopic to $f_{n-1}$. Since $\lim\,Z_n\simeq B$, it follows from Lemma \ref{Milnor seq} that the restriction map $[X,B]\to\lim\,[X,Z_n]$ is surjective. Then it follows that there is a map $\tilde{f}\colon X\to B$ such that the composite with the map $B\to Z_n$ is homotopic to $f_n$ for all $n$. In particular, we have $\varphi\circ\tilde{f}\simeq f$, and hence $f$ is a trivial relative phantom map. Thus any relative phantom map from $X$ to $\varphi$ is trivial, completing the proof.
\end{proof}

Hereafter, let $X$ be a finite type source and $B,Y,Y'$ be finite type targets. We start the proof of Theorem \ref{main 2}. By Theorem \ref{McGibbon-Roitberg}, one has:

\begin{lemma}
  \label{rationally contractible}
  If $Y$ is rationally contractible, then $\Ph(X,Y)=*$.
\end{lemma}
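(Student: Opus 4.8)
The plan is to deduce the lemma from Theorem \ref{McGibbon-Roitberg} applied to the basepoint inclusion $i\colon *\to Y$, rather than to the collapse map $Y\to *$.

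First I would unwind what rational contractibility buys us: it means $\pi_*(Y)\otimes\Q=0$ for $*\ge 2$. Hence, for $*\ge 2$, the homomorphism induced by $i$ is $\pi_*(*)\otimes\Q=0\to\pi_*(Y)\otimes\Q=0$, which is trivially surjective. Since the point and $Y$ are both finite type targets and $X$ is a finite type source, the map $i$ meets the hypothesis of Theorem \ref{McGibbon-Roitberg}.

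Applying that theorem gives that $i_*\colon\Ph(X,*)\to\Ph(X,Y)$ is surjective. But $\Ph(X,*)$ is a single point, since the only map $X\to *$ is the constant one. Surjectivity of $i_*$ therefore exhibits $\Ph(X,Y)$ as the image of a one-point set, forcing $\Ph(X,Y)=*$, which is the claim.

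The one genuine subtlety, and the step I expect to be the main obstacle for anyone reconstructing the argument, is choosing the correct direction in which to invoke Theorem \ref{McGibbon-Roitberg}. Running it on the collapse $c\colon Y\to *$ also satisfies the rational-homotopy hypothesis, but it only yields that $\Ph(X,Y)$ surjects onto the one-point set $\Ph(X,*)$, which is vacuous. It is the reverse map $*\to Y$ that is useful, and it is a rational-homotopy surjection precisely because $Y$ is rationally contractible; everything after this observation is formal.
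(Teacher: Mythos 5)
Your proof is correct and is exactly the argument the paper intends: the paper derives this lemma directly from Theorem \ref{McGibbon-Roitberg} (stating it without further proof), and the implicit application is precisely to the basepoint inclusion $*\to Y$, which is a rational-homotopy surjection since $\pi_*(Y)\otimes\Q=0$, so that $\Ph(X,*)=*$ surjects onto $\Ph(X,Y)$. Your remark about choosing the correct direction (the inclusion $*\to Y$ rather than the collapse $Y\to *$) identifies the only real content of the deduction.
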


\begin{lemma}
  \label{fiber inclusion}
  Let $Y\xrightarrow{f}Y'\xrightarrow{g}Z$ be a homotopy fibration such that $f$ is a rational equivalence. Then for a map $\varphi\colon B\to Y$, $f_*\colon\Ph(X,\varphi)\to\Ph(X,f\circ\varphi)$ is surjective.
\end{lemma}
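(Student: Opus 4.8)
The plan is to reduce the surjectivity to a skeleton-by-skeleton lifting problem that can be solved by the finiteness argument of Lemma \ref{limit}, and then to repair the resulting global map using the absolute case, Theorem \ref{McGibbon-Roitberg}. First I would record that since $f$ is a rational equivalence, the long exact homotopy sequence of the fibration $Y\xrightarrow{f}Y'\xrightarrow{g}Z$ forces $\pi_*(Z)\otimes\Q=0$, so $Z$, and hence $\Omega Z$, is rationally contractible. Fixing a relative phantom map $h'\colon X\to Y'$ to $f\circ\varphi$, the goal becomes to produce $h\in\Ph(X,\varphi)$ with $f\circ h\simeq h'$.

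Second, I would lift $h'$ one skeleton at a time. On $X^n$ the restriction $h'|_{X^n}$ lifts through $f\circ\varphi$ to some $\ell_n\colon X^n\to B$, so $\varphi\circ\ell_n\colon X^n\to Y$ is a lift of $h'|_{X^n}$ through $f$ which, by construction, lifts through $\varphi$. Let $C_n$ be the set of homotopy classes of maps $X^n\to Y$ that lift $h'|_{X^n}$ through $f$ and also lift through $\varphi$. Then $C_n$ is nonempty, as it contains $[\varphi\circ\ell_n]$, and it is finite: the classes lifting $h'|_{X^n}$ through $f$ form a single orbit of the action of $[X^n,\Omega Z]=[\Sigma X^n,Z]$ arising from the fibration $\Omega Z\to Y\xrightarrow{f}Y'$, and this group is finite by Lemma \ref{finite} since $\Sigma X^n$ is a finite complex and $Z$ is rationally contractible. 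Restriction yields a tower $\cdots\to C_2\to C_1$ of nonempty finite sets, so $\lim C_n\neq\emptyset$ by Lemma \ref{limit}; this gives a compatible system of skeletal lifts of $h'$, each of which lifts through $\varphi$.

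Third, I would assemble these skeletal lifts into a single map. By the Milnor sequence for the skeletal filtration of $X$, namely the exact sequence underlying $\Ph(X,Y)\cong\limone[\Sigma X^n,Y]$, the map $[X,Y]\to\lim[X^n,Y]$ is surjective, so the compatible system is realized by some $u\colon X\to Y$. Since each $u|_{X^n}$ lifts through $\varphi$, the map $u$ is a relative phantom map to $\varphi$. However $f\circ u$ agrees with $h'$ only on each skeleton, so the two differ by an absolute phantom map $q\in\Ph(X,Y')$, using that $\Ph(X,Y')$ acts transitively on the fibers of $[X,Y']\to\lim[X^n,Y']$.

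The main obstacle is exactly this last discrepancy: the global lift $u$ need not satisfy $f\circ u\simeq h'$ on the nose. To remove it I would invoke the absolute case. As $f$ is a rational equivalence it induces a surjection on $\pi_*\otimes\Q$, so by Theorem \ref{McGibbon-Roitberg} the map $f_*\colon\Ph(X,Y)\to\Ph(X,Y')$ is surjective; choose $\tilde q\in\Ph(X,Y)$ with $f_*\tilde q=q$. Acting on $u$ by $\tilde q$ through the same $\limone$-action yields $h$ with $h|_{X^n}\simeq u|_{X^n}$, so $h$ remains a relative phantom map to $\varphi$, while equivariance of $f_*$ gives $f\circ h\simeq h'$. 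This exhibits $h'$ in the image of $f_*\colon\Ph(X,\varphi)\to\Ph(X,f\circ\varphi)$. The delicate points to verify carefully are the transitivity and naturality of the $\limone$-action together with the finiteness input, but rational contractibility of $Z$ is what drives both the finiteness in the tower and the applicability of the absolute result.
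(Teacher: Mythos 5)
Your first two steps are sound and essentially coincide with the paper's: rational contractibility of $Z$ (hence of $\Omega Z$), the observation that the lifts of $h'|_{X^n}$ through $f$ form a single orbit of the finite group $[X^n,\Omega Z]$ (Lemma \ref{finite}), the tower of non-empty finite sets, Lemma \ref{limit}, and the Milnor sequence to assemble a map $u\colon X\to Y$ that is a relative phantom map to $\varphi$ and satisfies $f\circ u|_{X^n}\simeq h'|_{X^n}$ for all $n$. The gap is in your final correction step. You assert that $\Ph(X,Y')$ ``acts transitively on the fibers of $[X,Y']\to\lim\,[X^n,Y']$'' and that $f_*$ is equivariant for such actions. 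For a general target $Y'$ (not an H-space, and $X$ not a suspension) the set $\Ph(X,Y')$ carries no group structure and does not act on $[X,Y']$: there is no way to ``add'' a phantom map to an arbitrary map. What is true is that the fiber of $[X,Y']\to\lim\,[X^n,Y']$ through a class $\alpha$ is governed by $\limone\pi_1(\mathrm{map}_*(X^n,Y'),\alpha|_{X^n})$, the $\limone$ of fundamental groups of mapping spaces based at the restrictions of $\alpha$; only when $\alpha$ is the constant class does this recover $\Ph(X,Y')\cong\limone[\Sigma X^n,Y']$. So the element $q$ you want to subtract is not even defined, nor is the action of $\tilde q$ on $u$, and surjectivity of $f_*\colon\Ph(X,Y)\to\Ph(X,Y')$ from Theorem \ref{McGibbon-Roitberg} cannot be brought to bear this way.

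The paper circumvents exactly this difficulty by never comparing two global maps into $Y'$ that merely agree on skeletons. Since $g\circ h'$ is an absolute phantom map into the rationally contractible $Z$, Lemma \ref{rationally contractible} gives $g\circ h'\simeq *$, hence a genuine global lift $\tilde h\colon X\to Y$ with $f\circ\tilde h\simeq h'$ \emph{before} any limit argument. All subsequent corrections are then performed inside the group-valued tower $[X^n,\Omega Z]$, using the fiberwise action of $\Omega Z$ on $Y$ over $Y'$: after finding the compatible system $(k^n)$ (your element of $\lim C_n$), one applies Lemma \ref{limit} a second time, to the non-empty finite sets $G_n=\{a\in[X^n,\Omega Z]\;:\;\tilde h|_{X^n}\cdot a\simeq k^n\}$, assembles the resulting compatible system into some $b\in[X,\Omega Z]$ by the Milnor sequence of Lemma \ref{Milnor seq} (valid here because the tower consists of groups), and sets $\bar h=\tilde h\cdot b$. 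Because the action is over $Y'$, the relation $f\circ\bar h\simeq f\circ\tilde h\simeq h'$ holds automatically, while $\bar h|_{X^n}\simeq k^n$ lifts through $\varphi$, so $\bar h$ is the desired preimage. If you replace your final step by this two-stage use of $\tilde h$ and the $[X^n,\Omega Z]$-action, your argument becomes the paper's proof.
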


\begin{proof}
  Let $h\colon X\to Y'$ be a relative phantom map to $f\circ\varphi \colon B \to Y'$. Then for each $n$, there is a lift $h^n\colon X^n\to B$ of $h\vert_{X^n}$, where $X^n$ denotes the $n$-skeleton of $X$. Since $(g\circ h)\vert_{X^n}\simeq g\circ f\circ\varphi\circ h^n\simeq *$ for each $n$, $g\circ h$ is a phantom map, and hence by Lemma \ref{rationally contractible}, $g\circ h$ is null-homotopic. Thus one gets a map $\tilde{h}\colon X\to Y$ such that $f\circ\tilde{h}\simeq h$. We construct a relative phantom map $\bar{h}$ from $X$ to $\varphi$ such that $f\circ\bar{h}\simeq f\circ\tilde{h}$.

  Let $F_n\subset[X^n,Y]$ be the pointed homotopy set of composites of $\varphi$ and lifts $X^n\to B$ of $h\vert_{X^n}$ through $f\circ\varphi$, that is,
  $$F_n = \big\{ [\varphi \circ h'] \; | \; \textrm{$h' \colon X^n \to B$ is a map satisfying $f \circ \varphi \circ h' = h|_{X^n}$} \big\} .$$
Then one gets a tower of non-empty sets $\cdots\to F_3\to F_2\to F_1$. Let $h^n\colon X^n\to B$ be a lift of $h\vert_{X^n}$ through $f\circ\varphi$. Then $f\circ \tilde{h}\vert_{X^n}\simeq h\vert_{X^n}\simeq f\circ\varphi\circ h^n$. Consider the Puppe exact sequence
  $$[X^n,\Omega Z]\to[X^n,Y]\xrightarrow{f_*}[X^n,Y']\xrightarrow{g_*}[X^n,Z].$$
  Then there is $a_n\in[X^n,\Omega Z]$ such that $\tilde{h}\vert_{X^n}\cdot a_n\simeq\varphi\circ h^n$. Since $f\colon Y\to Y'$ is a rational equivalence, $Z$ is rationally contractible. Then by Lemma \ref{finite}, $[X^n,\Omega Z]$ is finite, and so each $F_n$ is finite. Thus by Lemma \ref{limit}, $\lim\,F_n\ne\emptyset$, or equivalently, there is $k^n\colon X^n\to Y$ for each $n$ such that $k^{n+1}\vert_{X^n}\simeq k^n$ and $k^n$ is homotopic to the composite of $\varphi$ and a lift of $h\vert_{X^n}$ through $f\circ\varphi$. Let $G_n$ be the subset of $[X^n,\Omega Z]$ consisting of $a_n$ such that $\tilde{h}\vert_{X^n}\cdot a_n\simeq k^n$. Then there is a tower of non-empty finite sets $\cdots\to G_3\to G_2\to G_1$, and by Lemma \ref{limit}, there is $b_n\in[X^n,\Omega Z]$ for each $n$ such that $\tilde{h}\vert_{X^n}\cdot b_n\simeq k^n$ and $b_{n+1}\vert_{X^n}\simeq b_n$. By Lemma \ref{Milnor seq}, the restriction map $[X,\Omega Z]\to\lim\,[X^n,\Omega Z]$ is surjective, and so there is a map $b\colon X\to\Omega Z$ such that $b\vert_{X^n}\simeq b_n$ for all $n$. Now we put $\bar{h}=\tilde{h}\cdot b$. Then $f\circ\bar{h}\simeq f\tilde{h}\simeq h$ and $\bar{h}\vert_{X^n}\simeq\tilde{h}\vert_{X^n}\cdot b_n\simeq k^n$, where $k^n$ lifts to $B$ through $\varphi$. Namely, $\bar{h} \colon X \to Y$ is a relative phantom map to $\varphi$. Thus the proof is completed.
\end{proof}

\begin{proof}
  [Proof of Theorem \ref{main 2}]
  Let $\{Y\to W_n\to Y'\}_{n\ge 1}$ be a principal Moore-Postnikov tower of a rational equivalence $f\colon Y\to Y'$, and let $g\colon X\to Y'$ be a relative phantom map to $f\circ\varphi \colon B \to Y'$. Since there is a homotopy fibration $W_i\to W_{i-1}\to K(\pi_{i+1}(Y',Y),i+1)$ for each $i$ where $\pi_{i+1}(Y',Y)$ is rationally trivial, it follows from Lemma \ref{fiber inclusion} that one inductively gets a relative phantom map $g_n\colon X\to W_n$ to $B\to W_n$ for each $n$ such that the composite $X\xrightarrow{g_n}W_n\to W_{n-1}$ is homotopic to $g_{n-1}$, where $g_0=g$. Since $\lim\,W_n\simeq Y$, it follows from Lemma \ref{Milnor seq} that the restriction map $[X,Y]\to\lim\,[X,W_n]$ is surjective. Then one gets a map $\tilde{g}\colon X\to Y$ such that the composite $X\xrightarrow{\tilde{g}}Y\to W_n$ is homotopic to $g_n$ for each $n$. In particular, by setting $n=0$, $f\circ\tilde{g}\simeq g$. It remains to show that $\tilde{g}$ is a relative phantom map to $\varphi$. Consider the commutative diagram
  $$\xymatrix{[X^n,B]\ar@{=}[r]\ar[d]^{\varphi_*}&[X^n,B]\ar[d]\\
  [X^n,Y]\ar[r]^(.42){\cong}&[X^n,W_{n+1}] ,}$$
  where the bottom map is a bijection. Since the composite $X^n\xrightarrow{\tilde{g}\vert_{X^n}}Y\to W_{n+1}$ is homotopic to $g_{n+1}\vert_{X^n}$ and $g_{n+1}$ is a relative phantom map to $B\to W_{n+1}$, this composite lifts to $B$. Thus by the commutative square above, one can see that $\tilde{g}\vert_{X^n}$ itself lifts to $B$ through $\varphi$. Since $n$ is arbitrary, $\tilde{g}$ is a relative phantom map to $\varphi$, completing the proof.
\end{proof}

Now we prove Theorem \ref{main 3}. We start with the following simple observation.

\begin{proposition}
  \label{adjoint}
  There is a natural isomorphism
  $$\Ph(\Sigma X,\varphi)\cong\Ph(X,\Omega\varphi).$$
\end{proposition}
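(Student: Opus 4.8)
The plan is to show that the classical suspension--loop adjunction bijection $[\Sigma X,Y]\cong[X,\Omega Y]$ restricts to the desired bijection on relative phantom maps, and that it does so naturally. Here $\Omega\varphi$ denotes the looped map $\Omega\varphi\colon\Omega B\to\Omega Y$. First I would recall that this adjunction is natural in both variables; in particular, for any map $\psi\colon U\to V$ and any $\ell\colon\Sigma W\to U$, the adjoint of $\psi\circ\ell$ is $\Omega\psi\circ\hat{\ell}$, where $\hat{\ell}\colon W\to\Omega U$ is the adjoint of $\ell$. Applying this with $\psi=\varphi$ will convert the lifting condition defining relative phantom maps to $\varphi$ into the one defining relative phantom maps to $\Omega\varphi$.

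The second ingredient is the skeletal structure of the reduced suspension. Taking the basepoint of $X$ to be a $0$-cell, every $k$-cell of $X$ other than the basepoint contributes a $(k+1)$-cell to $\Sigma X$, so that $(\Sigma X)^{n+1}=\Sigma(X^n)$ for all $n\ge 0$. Thus, if $f\colon\Sigma X\to Y$ has adjoint $g\colon X\to\Omega Y$, then the restriction $f\vert_{(\Sigma X)^{n+1}}=f\vert_{\Sigma X^n}$ is adjoint to $g\vert_{X^n}$.

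Combining these, I would argue as follows. A map $\ell\colon\Sigma X^n\to B$ satisfies $\varphi\circ\ell\simeq f\vert_{\Sigma X^n}$ if and only if its adjoint $\hat{\ell}\colon X^n\to\Omega B$ satisfies $\Omega\varphi\circ\hat{\ell}\simeq g\vert_{X^n}$, by naturality of the adjunction. Hence $f\vert_{\Sigma X^n}$ lifts to $B$ through $\varphi$ up to homotopy precisely when $g\vert_{X^n}$ lifts to $\Omega B$ through $\Omega\varphi$ up to homotopy. Since the skeleta of $\Sigma X$ are exactly the $\Sigma X^n$ (together with the basepoint $(\Sigma X)^0$, for which the lifting condition is vacuous), $f$ is a relative phantom map to $\varphi$ if and only if $g$ is a relative phantom map to $\Omega\varphi$. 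Therefore the adjunction bijection $[\Sigma X,Y]\cong[X,\Omega Y]$ carries $\Ph(\Sigma X,\varphi)$ bijectively onto $\Ph(X,\Omega\varphi)$.

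Naturality of the stated isomorphism then follows from naturality of the underlying adjunction: a morphism from $\varphi$ to $\varphi'$ given by a homotopy-commutative square loops to a morphism from $\Omega\varphi$ to $\Omega\varphi'$, and the induced maps on the phantom sets commute with the adjunction by the same naturality used above. I expect the only delicate point to be the bookkeeping in the skeletal identification $(\Sigma X)^{n+1}=\Sigma(X^n)$ and the attendant basepoint conventions for the reduced suspension; once this is in place, every remaining step is a formal consequence of the naturality of the adjunction.
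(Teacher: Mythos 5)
Your proof is correct and is exactly the paper's argument: the paper's proof is the single sentence that the claim ``immediately follows from the fact that $(\Sigma X)^{n+1}=\Sigma(X^n)$,'' with the suspension--loop adjunction and its naturality left implicit. Your write-up simply makes those implicit steps explicit, so nothing further is needed.
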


\begin{proof}
  This immediately follows from the fact that $(\Sigma X)^{n+1}=\Sigma(X^n)$.
\end{proof}

We will use the following property of the 6-term exact sequence involving $\limone$. Let $1\to\{K_n\}_{n\ge 1}\to\{G_n\}_{n\ge 1}\to\{H_n\}_{n\ge 1}\to 1$ be an exact sequence of towers of groups. Then as in \cite[Chapter IX, Proposition 2.3]{BK}, there is an exact sequence of pointed sets
\begin{equation}
  \label{6-term}
  1\to\lim\,K_n\to\lim\,G_n\to\lim\,H_n\to\limone K_n\to\limone G_n\to\limone H_n\to *.
\end{equation}
Moreover, the group $\lim\,H_n$ acts on $\limone K_n$ such that if two elements of $\limone K_n$ are mapped to the same element of $\limone G_n$, then they are in the same orbit of the action of $\lim\,H_n$.

\begin{lemma}
  \label{action 6-term}
  Suppose that $1\to K_n\to G_n\to H_n\to 1$ is a central extension for each $n$, that is, the sequence is exact such that $K_n$ is mapped into the center of $G_n$. Then the group $\limone K_n$ acts on $\limone G_n$ so that for each pair of elements $x$ and $y$ in $\lim^1 G_n$, $g(x)=g(y)$ implies that there is $a\in\limone K_n$ satisfying $x\cdot a=y$. Here $g \colon \limone G_n \to \limone H_n$ denotes the map in \eqref{6-term}.
\end{lemma}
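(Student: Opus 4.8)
The plan is to realize the desired action of $\limone K_n$ on $\limone G_n$ directly from the functoriality of the six-term sequence \eqref{6-term} under a suitable auxiliary tower, so that the statement becomes a formal consequence of the exactness properties already recorded. First I would exploit centrality: because $K_n$ lies in the center of $G_n$, the short exact sequences $1\to K_n\to G_n\to H_n\to 1$ assemble into an exact sequence of towers of groups in which the quotient tower $\{H_n\}$ inherits well-defined group structure and the connecting data behave abelianly where it matters. The key point is that centrality makes $\{K_n\}$ a tower of abelian groups, so $\limone K_n$ is itself an abelian group, and it is precisely this abelianness that lets one define an action by "translation" compatible with the maps in \eqref{6-term}.

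Next I would construct the action explicitly. Given the exact sequence of pointed sets in \eqref{6-term}, the map $\limone K_n\to\limone G_n$ induced by the inclusions $K_n\hookrightarrow G_n$ provides a distinguished family of elements; using the central group structure of each $G_n$ one multiplies representatives of an element of $\limone G_n$ by representatives of an element of $\limone K_n$, and centrality guarantees that the resulting classes in $\limone G_n$ are independent of the choices, yielding a well-defined map $\limone G_n\times\limone K_n\to\limone G_n$ satisfying the action axioms. Concretely, if $(g_n)$ is a $1$-cocycle representing $x\in\limone G_n$ and $(a_n)$ represents $a\in\limone K_n$, then $(g_n\cdot a_n)$ is again a cocycle precisely because the $a_n$ are central, and its class depends only on $x$ and $a$.

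Then I would verify the separation property: if $g(x)=g(y)$ in $\limone H_n$, then $x$ and $y$ differ by an element of $\limone K_n$. This is where exactness of \eqref{6-term} does the work. The map $\limone G_n\to\limone H_n$ is the quotient by the image of $\limone K_n$, and the already-stated fact that two elements of $\limone G_n$ with the same image under $g$ lie in a common orbit — here of the action of $\limone K_n$ rather than of $\lim H_n$ — follows by tracing the standard exact-sequence-of-nonabelian-cohomology argument, using centrality to upgrade the "orbit of $\lim H_n$" conclusion of \eqref{6-term} to an honest translation by $\limone K_n$. In effect, I would lift $x$ and $y$ to cocycles $(g_n)$ and $(g_n')$ whose images in $\{H_n\}$ are cohomologous, use this to adjust representatives so that $g_n$ and $g_n'$ have the same image in $H_n$ for all $n$, and conclude that $g_n^{-1}g_n'\in K_n$ defines the required element $a\in\limone K_n$ with $x\cdot a=y$.

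The main obstacle I anticipate is the bookkeeping in the nonabelian $\limone$ formalism: $\limone G_n$ is only a pointed set, not a group, so "translation by $\limone K_n$" must be defined and shown well-defined by hand using centrality rather than by appeal to any group structure on $\limone G_n$, and one must be careful that the adjustment of cocycle representatives in the separation step can be carried out coherently across the whole tower (this is exactly the place where centrality, as opposed to mere normality, is essential). I expect that the precise invocation of \cite[Chapter IX, Proposition 2.3]{BK} — extracting from the generic $\lim H_n$-action statement the sharper $\limone K_n$-action that centrality affords — will require the most care, but once the action is set up correctly the separation property should fall out of the exactness in \eqref{6-term} without further difficulty.
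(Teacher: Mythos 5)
Your proposal is correct and takes essentially the same approach as the paper: both realize $\limone$ as the orbit set of the twisted action of $\prod_n G_n$ on itself, define the $\limone K_n$-action by coordinatewise multiplication of representatives (well-defined precisely by centrality), and get the separation property by first adjusting a representative of $x$ so that its image in $\prod_n H_n$ agrees coordinatewise with that of $y$, whence the two representatives differ by an element of $\prod_n K_n$. If anything, your third paragraph makes the representative-adjustment step more explicit than the paper, which compresses it into the remark that the product-level statement ``descends'' to $\limone$.
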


\begin{proof}
  Let $\cdots\xrightarrow{f_3}A_3\xrightarrow{f_2}A_2\xrightarrow{f_1}A_1$ be a tower of groups. Then $\prod_{n\ge 1}A_n$ acts on itself by
  $$(a_1,a_2,\ldots)\cdot(b_1,b_2,\ldots)=(b_1a_1f_2(b_2)^{-1},b_2a_2f_3(b_3)^{-1},\ldots),$$
  and by definition, $\limone A_n$ is the orbit set of this action. Then since $K_n$ is an abelian group for each $n$, $\limone K_n$ is an abelian group, and since $\prod_{n\ge 1}K_n$ is in the center of $\prod_{n\ge 1}G_n$, the coordinatewise action of $\prod_{n\ge 1}K_n$ on $\prod_{n\ge 1}G_n$ induces the action of $\limone K_n$ on $\limone G_n$. If $u,v\in\prod_{n\ge 1}G_n$ are mapped to the same element of $\prod_{n\ge 1}H_n$, then there is $w\in\prod_{n\ge 1}K_n$ such that $u\cdot w=v$. This descends to the desired property of the action of $\limone K_n$ on $\limone G_n$.
\end{proof}

Fix a map $\varphi\colon B\to Y$. Let $\{B\to Z_n\xrightarrow{\varphi_n}Y\}_{n\ge 1}$ be a Moore-Postnikov tower of $\varphi$ and $F$ be the homotopy fiber of $\varphi$.

\begin{proposition}
  \label{exact seq}
  There is an exact sequence of pointed sets
  $$[X,\Omega B]\xrightarrow{(\Omega\varphi)_*}\Ph(X,\Omega\varphi)\to\Ph(X,F)\to\limone[X,\Omega Z_n]$$
  which is natural with respect to $\varphi$.
\end{proposition}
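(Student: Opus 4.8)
The plan is to run everything off the geometric fibration sequence $\Omega B\xrightarrow{\Omega\varphi}\Omega Y\xrightarrow{\partial}F\xrightarrow{j}B\xrightarrow{\varphi}Y$ attached to $\varphi$, bringing in the Moore--Postnikov tower only through the homotopy fibers $F_n$ of $\varphi_n\colon Z_n\to Y$. First I would pin down the three arrows. The map $(\Omega\varphi)_*$ sends $w\colon X\to\Omega B$ to $\Omega\varphi\circ w$, which lifts globally to $\Omega B$ and so is a trivial relative phantom map to $\Omega\varphi$; thus $\mathrm{im}((\Omega\varphi)_*)\subseteq\Ph(X,\Omega\varphi)$. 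The second arrow is $\partial_*$: if $u\colon X\to\Omega Y$ is a relative phantom map to $\Omega\varphi$, then on each skeleton $u|_{X^k}$ factors through $\Omega\varphi$, so $(\partial\circ u)|_{X^k}\simeq *$ because $\partial\circ\Omega\varphi\simeq *$; hence $\partial\circ u$ is an absolute phantom map into $F$ and $\partial_*$ indeed maps into $\Ph(X,F)$.

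The third arrow, induced by $j\colon F\to B$, is the delicate one, and constructing it is where I expect the real work to be. By the Milnor sequence (Lemma \ref{Milnor seq}) for the tower $\{Z_n\}$ with $\lim Z_n\simeq B$, the group $\limone[X,\Omega Z_n]$ is identified with the kernel of $\rho\colon[X,B]\to\lim[X,Z_n]$, so I must show $\rho(j\circ v)=*$ for every phantom map $v\colon X\to F$. Naturality of homotopy fibers over $Y$ gives a commuting square relating $j$ to $j_n\colon F_n\to Z_n$, whence $[X\xrightarrow{v}F\xrightarrow{j}B\to Z_n]=(j_n)_*[X\to F\to F_n]$. Now condition (3) of the Moore--Postnikov tower forces $\pi_i(F_n)=0$ for $i\ge n$, so $F_n$ is $(n-1)$-truncated; a map into $F_n$ that is null on $X^{n-1}$ is therefore null, and since $v$ is phantom the composite $X\to F\to F_n$ is null on every skeleton and hence null. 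Thus $\rho(j\circ v)=*$, and $j_*$ corestricts to the desired map $\Ph(X,F)\to\limone[X,\Omega Z_n]$. This truncatedness of the Moore--Postnikov fibers is the crux of the whole argument.

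With the maps in hand, exactness I would check skeleton-wise against the Puppe sequence. At $\Ph(X,\Omega\varphi)$: from $[X,\Omega B]\xrightarrow{(\Omega\varphi)_*}[X,\Omega Y]\xrightarrow{\partial_*}[X,F]$ one gets $\ker(\partial_*)=\mathrm{im}((\Omega\varphi)_*)$, and intersecting with $\Ph(X,\Omega\varphi)$, together with $\mathrm{im}((\Omega\varphi)_*)\subseteq\Ph(X,\Omega\varphi)$, yields $\ker(\partial_*|_{\Ph(X,\Omega\varphi)})=\mathrm{im}((\Omega\varphi)_*)$, which is exactness there. At $\Ph(X,F)$: since $j\circ\partial\simeq *$ we have $j_*\circ\partial_*=*$, and because $\limone[X,\Omega Z_n]\hookrightarrow[X,B]$ is injective, $\ker(j_*|_{\Ph(X,F)})=\Ph(X,F)\cap\ker(j_*)=\Ph(X,F)\cap\mathrm{im}(\partial_*)$ by the Puppe sequence. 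The one inclusion needing an idea is the reverse: given $v\in\Ph(X,F)$ with $v\simeq\partial\circ g$, phantomness gives $\partial\circ(g|_{X^k})\simeq *$, so the Puppe sequence over each $X^k$ lets $g|_{X^k}$ lift to $\Omega B$; hence $g$ is itself a relative phantom map to $\Omega\varphi$ with $\partial_* g=v$, so $v\in\mathrm{im}(\partial_*|_{\Ph(X,\Omega\varphi)})$.

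Naturality in $\varphi$ is then automatic, since the fibration sequence, the Moore--Postnikov tower and its fibers, and the Milnor sequence are all natural with respect to the underlying map. One could instead assemble the sequence from the six-term sequence \eqref{6-term} applied to the towers arising from $\Omega^2 Y\to\Omega F_n\to\Omega Z_n$, after checking $\Ph(X,F)\cong\limone[X,\Omega F_n]$ and matching the connecting map with $\partial_*$; but that identification again rests on the truncatedness of $F_n$, so the geometric route above seems the most economical.
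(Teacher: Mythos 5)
Your proof is correct, but it takes a genuinely different route from the paper's. The paper never works with the geometric fibration sequence: it identifies $\Ph(X,\Omega\varphi)$ with $\lim\,(\Omega\varphi)_*([X,\Omega Z_n])$ via Proposition \ref{Ph}, feeds the towers of groups coming from the Puppe sequences of the fibrations $F_n\to Z_n\to Y$ into the six-term $\lim$--$\limone$ sequence \eqref{6-term}, kills $\limone(\Omega\delta_n)_*([X,\Omega^2Y])$ because that tower is one of surjections, and then invokes the Berstein--Ganea centrality of \eqref{central} together with Lemma \ref{action 6-term} to identify $\limone H_n$ with $\limone[X,\Omega F_n]\cong\Ph(X,F)$, the $\{F_n\}$ being a Postnikov tower of $F$; in particular both middle maps of the paper's sequence are produced purely algebraically. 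You instead build the sequence from the fibration sequence $\Omega B\to\Omega Y\xrightarrow{\delta}F\xrightarrow{j}B\to Y$ with the explicit maps $(\Omega\varphi)_*$, $\delta_*$, $j_*$, check exactness by applying Puppe exactness skeleton-wise, and use the Moore--Postnikov tower only through the fact that $F_n$ is $(n-1)$-truncated, so that a phantom map composed $X\to F\to F_n$ is null and hence $j_*$ lands in $\ker\bigl([X,B]\to\lim\,[X,Z_n]\bigr)\cong\limone[X,\Omega Z_n]$; this last identification uses the injectivity of the $\limone$-term in the Milnor sequence, a standard refinement of Lemma \ref{Milnor seq} that the paper itself relies on in the proof of Theorem \ref{main 3}, so it is fair game. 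What your route buys is significant: your middle map is by construction the connecting map $\delta_*$, which is precisely what Problem \ref{connecting map} asks about for the paper's sequence and what Proposition \ref{delta} can verify only when $\varphi$ is a loop map or $X$ is a suspension --- so you obtain the geometric sequence unconditionally (though note this does not literally settle Problem \ref{connecting map}, which asks whether the paper's particular algebraically-defined map equals $\delta_*$, and two exact sequences with the same terms need not have the same maps). What the paper's route buys is that the six-term machinery automatically yields the orbit description of the fibers of $\Ph(X,\Omega\varphi)\to\Ph(X,F)$ under the $[X,\Omega B]$-action, which is exactly hypothesis (2) of Lemma \ref{4-lemma} needed in the proof of Theorem \ref{main 3}; your sequence has the same property via the standard Puppe-sequence action, but that additional check would have to be recorded before your sequence could substitute for the paper's there. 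Finally, your closing remark sketches essentially the paper's actual proof, except that identifying $\limone H_n$ with $\limone[X,\Omega F_n]$ requires not truncatedness but the centrality input from Berstein--Ganea plus Lemma \ref{action 6-term}, since these $\limone$'s are of possibly non-abelian groups.
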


\begin{proof}
  Let $F_n$ be the homotopy fiber of $\varphi_n$. Consider the Puppe exact sequence
  \begin{equation}
    \label{Puppe}
    [X,\Omega^2Y]\xrightarrow{(\Omega\delta_n)_*}[X,\Omega F_n]\to[X,\Omega Z_n]\xrightarrow{(\Omega\varphi_n)_*}[X,\Omega Y]
  \end{equation}
  where $\delta_n\colon\Omega Y\to F_n$ is the connecting map of a homotopy fibration $F_n\to Z_n\xrightarrow{\varphi_n}Y$. Let $H_n$ be the image of $[X,\Omega F_n]\to[X,\Omega Z_n]$. Then there is an exact sequence of towers of groups,
  $$1\to\{H_n\}_{n\ge 1}\to\{[X,\Omega Z_n]\}_{n\ge 1}\to\{(\Omega\varphi_n)_*([X,\Omega Z_n])\}_{n\ge 1}\to 1.$$
  Hence by \eqref{6-term}, there is an exact sequence of pointed sets
  $$\lim\,[X,\Omega Z_n]\to\lim\,(\Omega\varphi)_*([X,\Omega Z_n])\to\limone H_n\to\limone[X,\Omega Z_n].$$
  Since $\{\Omega B\to\Omega Z_n\xrightarrow{\Omega\varphi_n}Y\}_{n\ge 1}$ is a Moore-Postnikov tower of $\Omega\varphi$, it follows from Proposition \ref{Ph} that
  $$\Ph(X,\Omega\varphi)\cong\lim\,(\Omega\varphi)_*([X,\Omega Z_n]).$$
  Then since $\lim\,\Omega Z_n\simeq\Omega B$, the composite
  $$[X,\Omega B]\to\lim\,[X,\Omega Z_n]\to\lim\,(\Omega\varphi)_*([X,\Omega Z_n])$$
  is identified with the map $(\Omega\varphi)_*\colon[X,\Omega B]\to\Ph(X,\Omega\varphi)$, where $[X,\Omega B]\to\lim\,[X,\Omega Z_n]$ is surjective by Lemma \ref{Milnor seq}. Then it remains to identify $\limone H_n$ with $\Ph(X,F)$.

  By \eqref{Puppe} there is an exact sequence of towers of groups
  \begin{equation}
    \label{central}
    1\to\{(\Omega\delta_n)_*([X,\Omega^2Y])\}_{n\ge 1}\to\{[X,\Omega F_n]\}_{n\ge 1}\to\{H_n\}_{n\ge 1}\to 1,
  \end{equation}
  and hence by \eqref{6-term}, there is an exact sequence of pointed sets
  $$\limone(\Omega\delta_n)_*([X,\Omega^2Y])\to\limone[X,\Omega F_n]\to\limone H_n\to*.$$
  Since $\{(\Omega\delta_n)_*([X,\Omega^2Y])\}_{n\ge 1}$ is a tower of surjections, $\limone(\Omega\delta_n)_*([X,\Omega^2Y])=*$. By \cite{BG}, \eqref{central} is a central extension, and so by Lemma \ref{action 6-term}, $\limone H_n\cong\limone[X,\Omega F_n]$ as desired. The naturality of the exact sequence follows from the naturality of Moore-Postnikov towers.
\end{proof}

Now we prove Theorem \ref{main 3}. We start with the following algebraic lemma.


\begin{lemma}
  \label{4-lemma}
  Given a commutative diagram of pointed sets
  $$\xymatrix{A_1\ar[r]^{f_1}\ar[d]^\alpha&B_1\ar[r]^{g_1}\ar[d]^\beta&C_1\ar[r]^{h_1}\ar[d]^\gamma&D_1\ar[d]^\delta\\
  A_2\ar[r]^{f_2}&B_2\ar[r]^{g_2}&C_2\ar[r]^{h_2}&D_2}$$
  with exact rows, suppose that
  \begin{enumerate}
    \item $A_i$ and $B_i$ are groups and $f_i,\alpha,\beta$ are group homomorphisms for $i=1,2$;
    \item $A_i$ acts on $B_i$ for $i=1,2$ so that
    \begin{enumerate}
      \item if $g_i(b)=g_i(b')$ for $b,b'\in B_i$, then $b'=b\cdot a$ for some $a\in A_i$, and
      \item $\beta(b\cdot a)=\beta(b)\cdot\alpha(a)$ for $a\in A_1$ and $b\in B_1$.
    \end{enumerate}
  \end{enumerate}
  If $\alpha$ and $\gamma$ are surjective and $\delta^{-1}(*)=*$, then $\beta$ is surjective.
\end{lemma}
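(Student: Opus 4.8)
The plan is to run the classical four-lemma diagram chase aimed at surjectivity of $\beta$, with the only novelty being that the fiberwise bookkeeping usually done by subtracting a kernel element is instead carried out by the $A_i$-actions on $B_i$. Concretely, I would start from an arbitrary $b_2\in B_2$ and build a preimage under $\beta$. Pushing forward gives $g_2(b_2)\in C_2$, and since $\gamma$ is surjective I can pick $c_1\in C_1$ with $\gamma(c_1)=g_2(b_2)$. To see that $c_1$ already comes from $B_1$, I examine its image in $D_1$: commutativity of the right-hand square gives $\delta(h_1(c_1))=h_2(\gamma(c_1))=h_2(g_2(b_2))$, and exactness of the bottom row at $C_2$ (i.e. $\mathrm{im}(g_2)=h_2^{-1}(*)$) forces $h_2(g_2(b_2))=*$. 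Hence $\delta(h_1(c_1))=*$, and the hypothesis $\delta^{-1}(*)=*$ yields $h_1(c_1)=*$. Exactness of the top row at $C_1$ then produces $b_1\in B_1$ with $g_1(b_1)=c_1$.

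The point where the extra structure is genuinely needed is that this $b_1$ need not satisfy $\beta(b_1)=b_2$ on the nose. All one knows is that, by commutativity of the middle square, $g_2(\beta(b_1))=\gamma(g_1(b_1))=\gamma(c_1)=g_2(b_2)$; that is, $\beta(b_1)$ and $b_2$ have the same image under $g_2$. Here I would invoke property (2a) for $i=2$ to obtain $a_2\in A_2$ with $b_2=\beta(b_1)\cdot a_2$, then use surjectivity of $\alpha$ to lift $a_2$ to some $a_1\in A_1$ with $\alpha(a_1)=a_2$, and finally apply the equivariance (2b) to compute
$$\beta(b_1\cdot a_1)=\beta(b_1)\cdot\alpha(a_1)=\beta(b_1)\cdot a_2=b_2.$$
Since $b_1\cdot a_1\in B_1$, this exhibits the required preimage and proves that $\beta$ is surjective.

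The main obstacle is precisely this last correction: after the diagram chase one lands on a $b_1$ that is only correct ``up to the $A_2$-orbit'', and the whole content of hypotheses (2a) and (2b) is to let that discrepancy be absorbed equivariantly. I would flag that the argument uses only exactness of the two rows at $C_1$ and $C_2$, surjectivity of $\alpha$ and $\gamma$, the basepoint-injectivity of $\delta$, and the two action axioms; the ambient group and homomorphism hypotheses in (1) are not called upon directly in the chase but are what make the rows exact in the intended sense and make the $A_i$-action on $B_i$ and the equivariance of $\beta$ meaningful, which is exactly the shape in which the six-term $\limone$ sequence will feed this lemma in the application.
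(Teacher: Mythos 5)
Your proposal is correct and is essentially the same argument as the paper's proof: the identical diagram chase starting from $b_2\in B_2$, lifting through $\gamma$, using $\delta^{-1}(*)=*$ and exactness at $C_1$ to obtain $b_1$, and then correcting $\beta(b_1)$ by the $A_2$-action via (2a), surjectivity of $\alpha$, and equivariance (2b). No gaps; the reflection on which hypotheses are actually used matches how the lemma is applied in the proof of Theorem \ref{main 3}.
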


\begin{proof}
  Take any $b\in B_2$. Since $\gamma$ is surjective, there is $c\in C_1$ such that $\gamma(c)=g_2(b)$. Note that $\delta\circ h_1(c)=h_2\circ\gamma(c)=h_2\circ g_2(c)=*$. Then since $\delta^{-1}(*)=*$, we have $h_1(c)=*$, and so there is $b'\in B_1$ such that $g_1(b')=c$. Since $g_2\circ\beta(b')=\gamma\circ g_1(b')=\gamma(c)=g_2(b)$, there is $a\in A_2$ such that $b=\beta(b')\cdot a$. Since $\alpha$ is surjective, there is $a'\in A_1$ such that $\alpha(a')=a$, implying $b=\beta(b')\cdot a=\beta(b')\cdot\alpha(a')=\beta(b'\cdot a')$. Thus the proof is done.
\end{proof}

\begin{proof}
  [Proof of Theorem \ref{main 3}]
  Let $\{B\to Z_n\to Y\}_{n\ge 1}$ and $\{B\to Z_n'\to Y'\}_{n\ge 1}$ be Moore-Postnikov towers of $\varphi$ and $f\circ\varphi$, respectively. Since $\lim\,Z_n\simeq\lim\,Z_n'\simeq B$ and there is a homotopy commutative diagram
  $$\xymatrix{B\ar[r]\ar@{=}[d]&Z_n\ar[r]\ar[d]&Y\ar[d]^f\\
  B\ar[r]&Z_n'\ar[r]&Y'}$$
  for each $n$, it follows from Lemma \ref{Milnor seq} that there is a commutative diagram
  $$\xymatrix{\ast\ar[r]&\limone[A,\Omega Z_n]\ar[r]\ar[d]^g&[A,B]\ar[r]\ar@{=}[d]&\lim\,[A,Z_n]\ar[d]\ar[r]&\ast\\
  \ast\ar[r]&\limone[A,\Omega Z_n']\ar[r]&[A,B]\ar[r]&\lim\,[X,Z_n']\ar[r]&\ast}$$
  where $g$ is induced from the natural maps $Z_n\to Z_n'$. Then in particular, the map $g\colon\limone\,[A,\Omega Z_n]\to\limone[A,\Omega Z_n']$ satisfies $g^{-1}(*)=*$. By Proposition \ref{exact seq}, there is a commutative diagram with exact rows
  $$\xymatrix{[A,\Omega B]\ar[r]\ar@{=}[d]&\Ph(A,\Omega\varphi)\ar[r]\ar[d]^{(\Omega f)_*}&\Ph(A,F)\ar[r]\ar[d]^{\tilde{f}_*}&\limone\,[A,\Omega Z_n]\ar[d]^g\\
  [A,\Omega B]\ar[r]&\Ph(A,\Omega(f\circ\varphi))\ar[r]&\Ph(A,F')\ar[r]&\limone\,[A,\Omega Z_n']}$$
  where $F$ and $F'$ are the homotopy fibers of $\varphi$ and $f\circ\varphi$ respectively and $\tilde{f}\colon F\to F'$ is the induced map from $f$. By assumption, $f_*\colon\pi_*(Y,B)\otimes\Q\to\pi_*(Y',B)\otimes\Q$ is surjective, and this map is identified with $\tilde{f}_*\colon\pi_*(F)\otimes\Q\to\pi_*(F')\otimes\Q$. By the construction together with the property of the 6-term exact sequence of $\lim$ and $\limone$ mentioned before Lemma \ref{action 6-term}, the commutative diagram above satisfy all the conditions of Lemma \ref{4-lemma}. Thus the map $f_*\colon\Ph(A,\Omega\varphi)\to\Ph(A,\Omega(f\circ\varphi))$ is surjective. Therefore by Proposition \ref{adjoint}, the proof is completed.
\end{proof}


\section{Further problems}

In \cite{MR}, there are two main theorems (and their duals): one is Theorem \ref{McGibbon-Roitberg} and the other is the following.

\begin{theorem}
  \label{MR trivial}
  Let $Y$ be a finite type target. The following are equivalent:
  \begin{enumerate}
    \item $\Ph(X,Y)=*$ for every finite source $X$;
    \item $\Ph(K(\Z,n),Y)=*$ for every $n$;
    \item there is a rational homotopy equivalence $\prod_\alpha K(\Z,n_\alpha)\to\Omega_0Y$.
  \end{enumerate}
\end{theorem}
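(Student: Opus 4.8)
The plan is to establish the cycle of implications $(1)\Rightarrow(2)\Rightarrow(3)\Rightarrow(1)$. The implication $(1)\Rightarrow(2)$ is immediate, since each $K(\Z,n)$ is a connected CW-complex of finite type and hence a finite type source, so $(2)$ is merely the specialization of $(1)$ to $X=K(\Z,n)$. The content of the theorem thus resides in the other two implications. Throughout I would exploit the congruence $\Ph(X,Y)\cong\limone[\Sigma X^n,Y]$ recalled in the introduction, together with the suspension--loop adjunction $[\Sigma X^n,Y]\cong[X^n,\Omega_0 Y]$ (the image of a connected complex landing in the basepoint component). This rewrites the phantom set as $\Ph(X,Y)\cong\limone[X^n,\Omega_0 Y]$ and transports the whole problem onto the loop space $\Omega_0 Y$, which is exactly where condition $(3)$ lives.

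For $(3)\Rightarrow(1)$ I would first dispose of the model case $\Omega_0 Y=P:=\prod_\alpha K(\Z,n_\alpha)$. Here $[X^n,P]\cong\prod_\alpha H^{n_\alpha}(X^n;\Z)$, and for each fixed $\alpha$ the tower $\{H^{n_\alpha}(X^n;\Z)\}_n$ is eventually constant, hence Mittag--Leffler; since $\limone$ of a product of towers is the product of the $\limone$'s, this yields $\limone[X^n,P]=\ast$. To pass along the given rational equivalence $u\colon P\to\Omega_0 Y$, I would use its homotopy fiber $F$, which is rationally trivial and of finite type, so that $[X^n,F]$ and $[X^n,\Omega F]$ are finite sets by Lemma \ref{finite}. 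The fibration $F\to P\to\Omega_0 Y$ then shows that $u_\ast\colon[X^n,P]\to[X^n,\Omega_0 Y]$ is a map of towers with finite kernel and cokernel, and since a tower of finite groups has vanishing $\limone$, two applications of the six-term exact sequence \eqref{6-term} give $\limone[X^n,\Omega_0 Y]\cong\limone[X^n,P]=\ast$, i.e. $\Ph(X,Y)=\ast$. (When $Y$ is simply-connected one may instead deloop the rational equivalence and invoke Theorem \ref{McGibbon-Roitberg} together with its injective dual, comparing $Y$ directly with $\prod_\alpha K(\Z,n_\alpha+1)$.)

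The heart of the matter is $(2)\Rightarrow(3)$, which I would prove in contrapositive form $\neg(3)\Rightarrow\neg(2)$: assuming $\Omega_0 Y$ is \emph{not} rationally a product of Eilenberg--MacLane spaces, I would produce an integer $n$ with $\Ph(K(\Z,n),Y)\neq\ast$. Since $\Omega_0 Y$ is a loop space, its rational cohomology is free graded-commutative, so the failure of $(3)$ is precisely the failure of $\Omega_0 Y$ to be rationally \emph{formal}; equivalently, some higher rational homotopy operation does not vanish---a nonzero Massey product in $H^*(\Omega_0 Y;\Q)$, or a rationally nonzero $k$-invariant in the Postnikov tower of $\Omega_0 Y$. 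The plan is to realize such an operation as a nonzero class in $\limone[K(\Z,n)^m,\Omega_0 Y]$, choosing $n$ so that the polynomial (respectively exterior) rational cohomology of $K(\Z,n)$ carries the classes entering the operation, and arranging the resulting tower to violate the Mittag--Leffler condition. The universal role of the spaces $K(\Z,n)$ in $(2)$ is exactly that every obstruction to formality can be pulled back from a suitable one of them.

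I expect this last implication to be the main obstacle. The difficulty is that $\limone$ is insensitive to finite (torsion) phenomena yet delicate on the divisible, rational part, so one must isolate the rational obstruction and show that it genuinely survives into $\limone$ rather than being annihilated at a finite stage; the nonabelian group structure on $[K(\Z,n)^m,\Omega_0 Y]$ and the naturality needed to compare towers along the skeletal inclusions add further bookkeeping. The cleanest route, I believe, is to reduce at the outset---via Theorem \ref{McGibbon-Roitberg} and its dual---to the rationalization of $\Omega_0 Y$, so that the entire comparison is carried out among rational spaces and the obstruction appears as an honestly nonzero rational cohomology class, after which its realization by a phantom out of $K(\Z,n)$ becomes a matter of unwinding the $\limone$ description.
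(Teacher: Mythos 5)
A preliminary remark on the comparison itself: this paper contains no proof of the statement --- Theorem \ref{MR trivial} is quoted verbatim from McGibbon--Roitberg \cite{MR} as background for the problems of Section 5 --- so your proposal can only be judged against the known proof in \cite{MR}, not against anything in this paper. Within your proposal, (1)$\Rightarrow$(2) is fine (reading ``finite source'' as ``finite type source'', which is clearly what is intended, since $K(\Z,n)$ is a finite type source and the theorem would otherwise be false). Your outline of (3)$\Rightarrow$(1) is also the right general strategy --- reduce to the model case $\Omega_0Y=\prod_\alpha K(\Z,n_\alpha)$, where the towers $\{[X^n,\prod_\alpha K(\Z,n_\alpha)]\}$ are eventually constant hence Mittag--Leffler, then transport along the homotopy fiber $F$ of $u\colon\prod_\alpha K(\Z,n_\alpha)\to\Omega_0Y$ using finiteness of $[X^n,F]$ as in Lemma \ref{finite} --- but the execution has a real gap: $u$ is only a map of spaces, not an H-map, so $u_*\colon[X^n,\prod_\alpha K(\Z,n_\alpha)]\to[X^n,\Omega_0Y]$ is not a group homomorphism, ``finite kernel and cokernel'' is undefined, and the six-term sequence \eqref{6-term} (which requires an exact sequence of towers of groups) does not apply as you invoke it. One must instead argue with exact sequences of pointed sets, group actions, and towers of finite sets --- exactly the kind of bookkeeping this paper carries out in Lemma \ref{fiber inclusion} via Lemma \ref{limit}. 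Your parenthetical alternative (``deloop the rational equivalence'') is not available: a rational equivalence $\prod_\alpha K(\Z,n_\alpha)\to\Omega_0Y$ need not deloop to a map $\prod_\alpha K(\Z,n_\alpha+1)\to Y$; if it always did, (3)$\Rightarrow$(1) would be an immediate corollary of Theorem \ref{McGibbon-Roitberg} and the theorem would be far shallower than it is.

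The fatal gap is in (2)$\Rightarrow$(3), the direction you yourself identify as the heart of the matter. You take $\neg(3)$ to mean that $\Omega_0Y$ is not rationally a product of Eilenberg--MacLane spaces, equivalently is not rationally formal, equivalently carries a nonzero rational Massey product or a rationally nontrivial $k$-invariant. This is false: $\Omega_0Y$ is a connected H-space of finite type, so by Hopf's theorem its rational cohomology is free graded-commutative, and $\Omega_0Y$ is \emph{always} rationally homotopy equivalent to a product of $K(\Q,n)$'s; every rational Massey product and every rational $k$-invariant of $\Omega_0Y$ vanishes. Condition (3) is strictly stronger than having the rational homotopy type of a product of Eilenberg--MacLane spaces: it demands a genuine map from the \emph{integral} product $\prod_\alpha K(\Z,n_\alpha)$ inducing the rational equivalence, and this can fail. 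The standard example is $Y=S^3$: one has $\Omega_0S^3\simeq_{\Q}K(\Q,2)$, yet there is no rational equivalence $K(\Z,2)=\C P^\infty\to\Omega_0S^3$ --- indeed, by this very theorem, its nonexistence is equivalent to $\Ph(\C P^\infty,S^3)\ne*$, the classical nontrivial phantom map. So the obstruction you need to realize out of some $K(\Z,n)$ is not a rational homotopy invariant of $\Omega_0Y$ at all; it is an arithmetic obstruction (a $\limone$/divisibility phenomenon) to integral realization of an always-existing rational equivalence, and your proposed final reduction ``to the rationalization of $\Omega_0Y$'' discards precisely the information the theorem is about. Under your reading, the hypothesis $\neg(3)$ is vacuous, and the argument proves nothing. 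This is why the actual proof in \cite{MR} works integrally, comparing the towers $\{[K(\Z,n)^m,\Omega_0Y]\}$ with their rational counterparts and exploiting the polynomial/exterior structure of $H^*(K(\Z,n);\Q)$, rather than realizing any rational homotopy operation.
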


Let $\varphi\colon B\to Y$ be a map. We pose:

\begin{problem}
  Find whether or not there is a condition on rational homotopy which is equivalent to that for every finite type source $X$, every phantom map from $X$ to $\varphi$ is trivial.
\end{problem}

This may be a very hard problem without any clue, and so we pose a weak version. If there is a homotopy fibration $B\xrightarrow{\varphi}Y\xrightarrow{\pi}W$, then by \eqref{exact seq Ph}, there is an exact sequence of pointed sets
$$[X,B]\xrightarrow{\varphi_*}\Ph(X,\varphi)\xrightarrow{\pi_*}\Ph(X,W).$$
Thus for every finite type source $X$, every relative phantom map from $X$ to $\varphi$ is trivial if and only if the map $\pi_*\colon\Ph(X,\varphi)\to\Ph(X,W)$ is trivial.

\begin{problem}
  Find whether or not there is a condition on rational homotopy which is equivalent to the triviality of $\pi_*$ for every finite type source $X$.
\end{problem}

We include the possibility of non-existence of conditions in the two problems above because not every property of absolute phantom maps is generalized to relative phantom maps. For example, if $\Ph(X,Y)\ne*$, then its cardinality is uncountable. On the other hand, as mentioned in Section 1, a non-trivial relative phantom map $X(n)\to\R P^\infty$ to the inclusion $i_n\colon\R P^n\to\R P^\infty$, which is an isomorphism in $\pi_1$, is constructed for $n\ge 3$ in \cite{IKM}. Since $\Ph(X(n),i_n)\subset[X(n),\R P^\infty]\cong\Z/2$ and the constant map is a relative phantom map, the cardinality of $\Ph(X(n),i_n)$ is two.

To pose our last problem, we discuss about the map $\Ph(X,\Omega\varphi)\to\Ph(X,F)$ in Proposition \ref{exact seq}. Let $S\xrightarrow{\alpha}T\xrightarrow{\beta}U$ be a homotopy fibration. Then a map $f\colon X\to T$ is a relative phantom map to $\alpha$ if and only if $\beta\circ f\colon X\to U$ is an absolute phantom map. Then the exact sequence of pointed sets $[X,S]\xrightarrow{\alpha_*}[X,T]\xrightarrow{\beta_*}[X,U]$ restricts to an exact sequence of pointed sets
\begin{equation}
  \label{exact seq Ph}
  [X,S]\xrightarrow{\alpha_*}\Ph(X,\alpha)\xrightarrow{\beta_*}\Ph(X,U).
\end{equation}
Let $\delta\colon\Omega Y\to F$ be the connecting map of a homotopy fibration $F\to Y\xrightarrow{\varphi}B$. Then there is an exact sequence of pointed sets
$$[X,\Omega B]\xrightarrow{\Omega\varphi}\Ph(X,\Omega Y)\xrightarrow{\delta_*}\Ph(X,F)$$
and so one may pose:

\begin{problem}
  \label{connecting map}
  Is the map $\Ph(X,\Omega\varphi)\to\Ph(X,F)$ in Proposition \ref{exact seq} the induced map $\delta_*$?
\end{problem}

There is an affirmative evidence to this problem.

\begin{proposition}
  \label{delta}
  If $\varphi\colon B\to Y$ is a loop map or $X$ is a suspension, then the map $\Ph(X,\Omega\varphi)\to\Ph(X,F)$ in Proposition \ref{exact seq} is the induced map $\delta_*$
\end{proposition}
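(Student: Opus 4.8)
The plan is to make the construction of the map $\Ph(X,\Omega\varphi)\to\Ph(X,F)$ from Proposition \ref{exact seq} completely explicit on representatives, and then to recognize the resulting cocycle as the Milnor-sequence description of $\delta_*$. Recall that the constructed map is the composite
$$\Ph(X,\Omega\varphi)\cong\lim\,(\Omega\varphi_n)_*([X,\Omega Z_n])\xrightarrow{\ \partial\ }\limone H_n\xleftarrow{\ \cong\ }\limone[X,\Omega F_n]\cong\Ph(X,F),$$
where $\partial$ is the connecting map of the $6$-term sequence \eqref{6-term} attached to $1\to\{H_n\}\to\{[X,\Omega Z_n]\}\to\{(\Omega\varphi_n)_*([X,\Omega Z_n])\}\to 1$, and the isomorphism $\limone H_n\cong\limone[X,\Omega F_n]$ comes from Lemma \ref{action 6-term} applied to \eqref{central}. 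First I would unwind $\partial$: given $\psi\colon X\to\Omega Y$ representing an element of $\Ph(X,\Omega\varphi)$, choose lifts $\xi_n\colon X\to\Omega Z_n$ with $(\Omega\varphi_n)\circ\xi_n\simeq\psi$, and form the defect $k_n=\xi_n\cdot(p_n)_*(\xi_{n+1})^{-1}\in H_n$, where $p_n\colon Z_{n+1}\to Z_n$. Lifting $k_n$ to $\tilde{k}_n\in[X,\Omega F_n]$, the image of $\psi$ is the class $[(\tilde{k}_n)]\in\limone[X,\Omega F_n]\cong\Ph(X,F)$.

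Next I would compute $\delta_*\psi=\delta\circ\psi$ through the same tower. By naturality of connecting maps applied to the map of fibrations from $\varphi$ to $\varphi_n$ covering $\mathrm{id}_Y$, the finite-stage connecting maps satisfy $\iota_n\circ\delta\simeq\delta_n$, where $\iota_n\colon F\to F_n$ and $\delta_n\colon\Omega Y\to F_n$ is the map occurring in \eqref{Puppe}. Since $\Omega Z_n$ is the homotopy fiber of $\delta_n$, a lift $\xi_n$ of $\psi$ is exactly a null-homotopy of $\delta_n\circ\psi=\iota_n\circ(\delta\circ\psi)$, and the defect $\tilde{k}_n\in[X,\Omega F_n]$ is precisely the difference of the null-homotopies determined by $\xi_n$ and by $(p_n)_*\xi_{n+1}$. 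On the other hand, the isomorphism $\Ph(X,F)\cong\limone[X,\Omega F_n]$ coming from the Milnor sequence (Lemma \ref{Milnor seq} for the tower $\{F_n\}$ with $\lim F_n\simeq F$) sends the phantom map $\delta\circ\psi$ to the class built from exactly these differences of null-homotopies of $\iota_n\circ(\delta\circ\psi)$. Thus on representatives $\partial\psi$ and $\delta_*\psi$ are given by the same cocycle $(\tilde{k}_n)$, and the two maps agree.

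The point at which the hypotheses are needed is in justifying that this comparison is literal rather than holding only up to the ambiguities inherent in the nonabelian $6$-term sequence. The correspondence ``lift of $\psi$'' $\leftrightarrow$ ``null-homotopy of $\delta_n\circ\psi$'' and the operation of subtracting two such null-homotopies to land in the fiber $\Omega F_n$ are canonical only when the relevant homotopy sets are groups and the structure maps are homomorphisms. If $X=\Sigma X'$ is a suspension, then $[X,\Omega(-)]\cong[X',\Omega^2(-)]$ are abelian groups, the towers in \eqref{6-term} and \eqref{central} become short exact sequences of abelian groups, $\partial$ is the ordinary connecting homomorphism, and the difference of null-homotopies is computed by honest subtraction in $[X,\Omega F_n]$; this forces $\partial\psi=\delta_*\psi$ on the nose. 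If instead $\varphi=\Omega\varphi'$ is a loop map, one may take the entire Moore-Postnikov tower, together with $F$, the $F_n$ and $\delta$, to be obtained by looping the corresponding data for $\varphi'$; then $\delta$ is itself a loop map, every structure map is an $H$-map, and the lifts $\xi_n$ and their defects may be chosen compatibly as loop maps, again removing the orbit ambiguity of Lemma \ref{action 6-term} and yielding $\partial=\delta_*$.

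The main obstacle I anticipate is precisely this last comparison: matching the abstract connecting homomorphism $\partial$ of the $6$-term sequence with the geometric connecting map $\delta_*$ requires controlling the non-canonical choices of lifts and null-homotopies, together with a sign and basepoint bookkeeping in passing between ``the loops of a connecting map'' and ``the connecting map of the looped fibration.'' Both hypotheses are tailored to collapse this ambiguity---one by linearizing via the cogroup structure of $\Sigma X'$, the other by delooping so that all maps become $H$-maps---and the substance of the argument is to check that in each case the cocycle $(\tilde{k}_n)$ produced by $\partial$ is the one produced by the Milnor identification of $\delta_*$.
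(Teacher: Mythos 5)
Your route is genuinely different from the paper's. You unwind the map of Proposition \ref{exact seq} on cocycle representatives and compare it with the Milnor-sequence description of $\delta_*$, whereas the paper never touches representatives: it compares the exact sequence for $\varphi$ with the one for the basepoint inclusion $j\colon\ast\to F$ (whose Moore--Postnikov tower is the connective-cover tower of $F$), observes that $\limone[X,\Omega F\langle n\rangle]=\ast$ since $\lim\,F\langle n\rangle\simeq\ast$, so that $\Ph(X,j)\to\Ph(X,F)$ is an isomorphism while $\Ph(X,j)=\Ph(X,F)$ by definition, and then reads off the identification from the commutative ladder; the hypotheses enter only to make that ladder available (deloop $j$ when $\varphi$ is a loop map, or use the cogroup structure of $X$ when it is a suspension). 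Your structural unwinding is accurate as far as it goes: the map is indeed $\partial$ followed by the inverse of the levelwise isomorphism $\limone[X,\Omega F_n]\cong\limone H_n$, lifts of $\psi$ to $\Omega Z_n$ do correspond to null-homotopies of $\delta_n\circ\psi$, and $\iota_n\circ\delta\simeq\delta_n$ holds by naturality of connecting maps.

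However, there is a genuine gap, and you name it yourself. The two central claims --- that the lift $\tilde{k}_n$ of the defect $\xi_n\cdot(p_n)_*(\xi_{n+1})^{-1}$ ``is precisely the difference of the null-homotopies,'' and that the Milnor identification sends $\delta\circ\psi$ to ``exactly these differences'' --- are asserted, and you then concede that checking them is ``the substance of the argument.'' That check is not routine: it requires fixing the equivalence of $\Omega Z_n$ with the homotopy fiber of $\delta_n$ (which carries a loop-inversion sign on the fiber inclusion $\Omega\varphi_n$), transporting null-homotopies across the homotopy $\iota_n\circ\delta\simeq\delta_n$, strictifying $\{F_n\}$ and the maps $\iota_n$ into an honest tower of fibrations with limit $F$ so that Lemma \ref{Milnor seq} applies, and matching the $\limone$-cocycle convention of \eqref{6-term} with the Milnor boundary convention; a mismatch at any point yields an inverse or conjugate cocycle rather than the desired one, which changes the class in $\limone$ even in the abelian case. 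In addition, your mechanism for the loop-map case is misstated: the lifts $\xi_n\colon X\to\Omega Z_n$ cannot ``be chosen compatibly as loop maps,'' since $X$ is an arbitrary finite type source with no cogroup structure; what the hypothesis actually buys is that the Moore--Postnikov tower may be taken to be a looped tower, so each $[X,\Omega Z_n]$ is a homotopy set into a double loop space and hence abelian, which (like the suspension hypothesis, via Eckmann--Hilton) is what kills the conjugation and ordering ambiguities. So the plan is workable in outline, but the step that constitutes the actual content of the proposition is left unexecuted --- which is presumably why the paper proves it by the choice-free naturality argument instead.
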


\begin{proof}
  Let $j\colon *\to F$ be the inclusion of a basepoint, $\delta\colon\Omega Y\to F$ be the connecting map, and $F\langle n\rangle$ be the $n$-connective cover of $F$. Then $\{*\to F\langle n\rangle\to F\}_{n\ge 1}$ is a Moore-Postnikov tower of $j$. Since there is a homotopy commutative diagram
  $$\xymatrix{\Omega B\ar[r]^{\Omega\varphi}\ar[d]&\Omega Y\ar[d]^\delta\\
  \ast\ar[r]^j&F}$$
  and either $\varphi$ is a loop map or $X$ is a suspension, it follows from the naturality of Proposition \ref{exact seq} that there is a commutative diagram of groups with exact rows
  $$\xymatrix{[X,\Omega B]\ar[r]\ar[d]&\Ph(X,\Omega\varphi)\ar[d]^{\delta_*}\ar[r]&\Ph(X,F)\ar@{=}[d]\ar[r]&\limone[X,\Omega Z_n]\ar[d]\\
  [X,*]\ar[r]&\Ph(X,j)\ar[r]&\Ph(X,F)\ar[r]&\limone[X,\Omega F\langle n\rangle].}$$
  Since $\lim\,F\langle n\rangle\simeq*$, it follows from Lemma \ref{Milnor seq} that $\limone[X,\Omega F\langle n\rangle]=*$, implying that the map $\Ph(X,j)\to\Ph(X,F)$ is an isomorphism. By definitioin, $\Ph(X,j)=\Ph(X,F)$, and so the map $\Ph(X,\Omega\varphi)\to\Ph(X,F)$ is identified with the induced map $\delta_*$.
\end{proof}

\end{document}